\newcommand{\note}[1]%
{\noindent\centerline{\fbox{\parbox{.9\textwidth}{\textbf{#1}}}}}
\newcommand{\snote}[1]%
{\fbox{\textbf{#1}}}
\newtheorem{proposition}{Proposition}[section]
\newtheorem{example}{Example}[section]
\newcommand{\RR}{{\mathbb{R}}}
\begin{document}
\bibliographystyle{plain}

\title{On the decay of the inverse of matrices \\ that are sum of Kronecker 
products}
\author{Claudio Canuto$^a$, Valeria Simoncini$^b$ 
and Marco Verani$^c$
}
\maketitle
\begin{center}
{\small
$^a$Dipartimento di Scienze Matematiche,
Politecnico di Torino,
Corso Duca degli Abruzzi 24,
I-10129 Torino, Italy\\ {\tt ccanuto@polito.it} 
\vskip 0.1cm
$^b$Dipartimento di Matematica,
Universit\`a di Bologna,
Piazza di Porta San Donato  5, I-40127 Bologna, Italy\\
{\tt valeria.simoncini@unibo.it}
\vskip 0.1cm
$^c$MOX - Dipartimento di
Matematica, Politecnico di Milano, via Bonardi, 9, I-20133 Milano, Italy \\
{\tt marco.verani@polimi.it}
}
\end{center}

\begin{abstract}
Decay patterns of
matrix inverses have recently attracted considerable interest,
due to their relevance in numerical analysis,
and in applications requiring matrix
function approximations.
 In this paper we analyze the decay pattern of the inverse
of banded matrices in the form $S=M \otimes I_n + I_n \otimes M$ where $M$ is tridiagonal, symmetric
and positive definite, $I_n$ is the identity matrix, and $\otimes$ stands for the Kronecker product. 
It is well known that the inverses of banded matrices exhibit an exponential
decay pattern away from the main diagonal.  However, the entries in $S^{-1}$ 
show a non-monotonic decay, which is not caught
by classical bounds. By using an alternative expression for $S^{-1}$, we
derive computable upper bounds that
 closely capture the actual behavior of its entries. We also show that similar estimates
can be obtained when $M$ has a larger bandwidth, or when the sum of Kronecker
products involves two different matrices.
Numerical experiments illustrating the new bounds are also reported.
\end{abstract}

\section{Introduction}
We consider nonsingular matrices 
$S$ of size $n^2\times n^2$ that can be written as
\begin{eqnarray}\label{eqn:main}
S = M \otimes I_n + I_n \otimes M ,
\end{eqnarray}
where $M$ is an $n\times n$ banded symmetric and positive
definite matrix (SPD) and $\otimes$ is the Kronecker product; 
here $I_n$ is the identity matrix of size $n$. 
Matrices in this form may arise for instance in the discretization of 
two-dimensional partial differential equations by means of finite difference,
spectral or finite element methods.
We say that a symmetric matrix $A$ is $b$-banded if its entries
$A_{ij}$ satisfy $A_{ij}=0$ for $|i-j|>b$.
In the following, we shall 
mainly focus on the case when $M$ is tridiagonal, so that $b=1$.
As a consequence of $M$ being banded, $S$ will also be banded, although
its bandwidth will be much larger: if $b$ is the bandwidth of $M$,
then $b\cdot n$ will be the bandwidth of $S$.

We are interested in exploring the magnitude pattern of the entries $(S^{-1})_{ij}$. It is
well-known that although the {\it inverse} of a banded matrix
is full in general - and in particular it is
not banded - its entries exponentially decay 
as their location deviates from the main diagonal; such a
decay pattern 
was analyzed  in detail in
\cite{SDWFMPWS84} for 
$S$ a general symmetric positive definite $b$-banded matrix. Indeed,
it was shown in 
\cite{SDWFMPWS84} that 
\begin{eqnarray}\label{eqn:demko}
|(S^{-1})_{ij} | \le \gamma q^{\frac{|i-j|}{b}}
\end{eqnarray}
where $\kappa$ is the condition number of $S$,
$q=(\sqrt{\kappa}-1)/(\sqrt{\kappa}+1)$,
$\gamma = \max\{\lambda_{\rm min}(S)^{-1}, \hat \gamma\}$, and
$\hat\gamma = (1+\sqrt{\kappa})^2/(2\lambda_{\max}(S))$; in this
bound the diagonal elements of $S$ are assumed not to be
greater than one. Here and in the following, 
$\lambda_{\min}(\cdot), \lambda_{\max}(\cdot)$ denote the smallest and largest
eigenvalues of the given symmetric matrix.

Decay patterns have attracted considerable interest in the scientific
computing community
in the last two decades,
due to their relevance in the context of linear system preconditioning 
\cite{Benzi.Golub.99}, \cite{Benzi.prec.02},
low-rank approximation strategies such as hierarchical matrices, 
wavelets etc. \cite{Wojtabook.97}, \cite{Bebendorf.Hackbusch.03},  
and in a large variety of applications requiring
matrix function approximations, such as electronic structure calculations,
complex networks, robotics, etc.; see, e.g.,  
\cite{Benzi.Boito.14},
\cite{Benzi2007},\cite{BenziSIREV.13},\cite{Benzi.Boito.10},\cite{Maslenetal.98}, %
 and the references therein.

A large amount of
literature has focused on the inverse entries of (irreducible) tridiagonal matrices
for which explicit formulas and recurrence relations
are now available; see, e.g., \cite{Vandebriletalbook1.08}, \cite{MeurantJul.1992}
and their references.
Some of these results can be generalized to {\it block} tridiagonal cases, of which
(\ref{eqn:main}) is a particular case for $M$ tridiagonal, however accurate estimates for the entries
have only been obtained under more restrictive assumptions \cite{Nabben.99}. 
In \cite{MeurantJul.1992}, for instance, the case of the discretization of the two-dimensional
Poisson operator was considered, which corresponds to (\ref{eqn:main})
with $M$ SPD, tridiagonal and with constant coefficients
(see Example \ref{ex:1} below). 


A key point of the matrices in the form (\ref{eqn:main}) is
that
the decay of the entries of its inverse is not monotonic away
from the diagonal. In fact, the entries decay in a way that
recalls a sinusoidal behavior converging to zero. 
We report in Figure \ref{fig:laplinv}
a typical such pattern, obtained for $M=-{\rm tridiag}(1,-\underline{2},1)$
(here and later in the paper, the underlined number lies on the matrix diagonal),
 corresponding
to the finite difference discretization of the two-dimensional negative Laplace
operator $-(u_{xx} + u_{yy})$ in the domain $[0,1]\times [0,1]$.
This non-monotonic behavior has been observed in the literature 
(\cite{MeurantJul.1992}),
and  explained in detail for the case of the discrete Laplacian, for which
precise estimates are available \cite{Bramble.Thomee.69}, \cite{McAllister.Sabotka.73}, 
\cite{Vejchodsky.Solin.07};
bounds stemming from an algebraic analysis were also determined in \cite{MeurantJul.1992}.
The situation is far less understood when $M$ is any tridiagonal SPD
matrix, or more generally any banded SPD matrix. Clearly, classical bounds such
as the one in (\ref{eqn:demko}) cannot catch this non-monotonic pattern,
although its detection can be crucial in sparsity-based approximation procedures.
In this paper we derive bounds that closely capture this
non-monotonic behavior, which is typical of matrices in the form (\ref{eqn:main}).
\begin{figure}[htb]
\centering
\includegraphics[width=2.5in,height=2.5in]{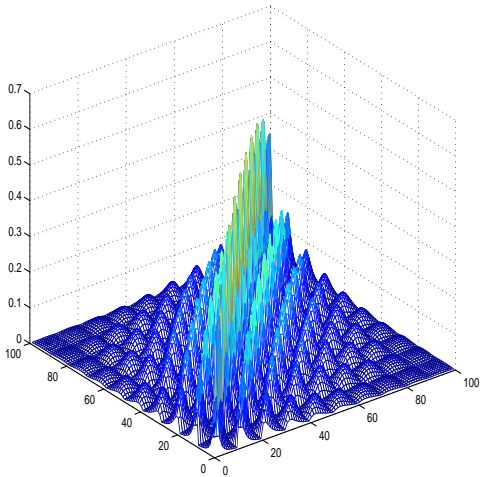}
\caption{Pattern of the
inverse of the 2D Laplace $100\times 100$ matrix in the unit square.\label{fig:laplinv}}
\end{figure}
In particular, we show that the decaying oscillation observed in practice in $|(S^{-1})_{i,j}|$ for
$i,j=1, \ldots, n^2$,
strongly depends on, and can be bounded by,
 the ``mesh'' distance between the two indices $i, j$ when each of them is
represented in a natural $n\times n$ grid.  In section \ref{sec:decay}
we provide sharp estimates,
followed by easily computable more qualitative bounds; the latter can be incorporated, for instance,
 in numerical thresholding strategies during a sparsity-oriented approximation of the matrix inverse 
(see, e.g., section \ref{sec:Cholfactor}).

In section \ref{sec:gen} we shall extend our results to banded SPD matrices, and to
the more general case 
\begin{eqnarray}\label{eqn:M1M2}
S_g := M_1 \otimes I_n + I_n \otimes M_2 ,
\end{eqnarray}
where $M_1$ and $M_2$ are symmetric tridiagonal matrices stemming, for instance,
from the discretization by finite differences of a self-adjoint
separable second order differential operator on a stretched rectangular domain,
or of an operator with different coefficients in the two space directions;
see, e.g., \cite{Levequebook.07}.

\section{Decay of the entries of the inverse of $S$ for $M$ tridiagonal}\label{sec:decay}
Let $X=S^{-1}$, and write $X=[x_1, \ldots, x_t, \ldots, x_{n^2}]$.
A simple but key observation is that each column $t$ of the inverse $X$ is the solution
to the linear system
$$
S x_t = e_t,
$$ 
where $e_t$ is the $t$-th column of $I_{n^2}$. Let us
define ${\cal W}_t$ to be the matrix such that $w_t = {\rm vec}({\cal W}_t)$ with
$w_t\in\RR^{n^2}$ and ${\cal W}_t \in \RR^{n\times n}$ (the ``vec'' operation stacks
the columns of ${\cal W}_t$ one below the other).
With this notation, and
using the Kronecker form of $S$, the system above is equivalent to
$$
M {\cal X}_t + {\cal X}_t M = {\cal E}_t.
$$
Since $e_{t} ={\rm vec}({\cal E}_t)$, $t=1, \ldots, n^2$, the 
matrix ${\cal E}_t$ has a single nonzero element $({\cal E}_t)_{ij}$, with indices
$j=\lfloor (t-1)/n\rfloor +1$, $i=t-n\lfloor (t-1)/n\rfloor $, 
$i,j\in \{1, \ldots, n\}$. Therefore, we can write
${\cal E}_t= {\cal E}_{i+n(j-1)}=e_ie_j^\top$.
 
The derivation above shows
that the $n^2$ entries of each column of $S^{-1}$, properly reordered,
correspond to the $n\times n$ entries of the solution matrix
to a Lyapunov equation. In Figure \ref{fig:lyapsol} we report the
pattern of ${\cal E}_t$ (left) and of ${\cal X}_t$ (right) for $t=26$ when
$S$ is the finite difference discretization of the two-dimensional Laplace
operator in the unit square. Note that because of the isotropy property of the
operator, the forcing term (the right-hand side) diffuses in a similar way
in both directions; see also a related discussion in \cite[section 4.1]{MeurantJul.1992}.

We next exploit the closed form of the Lyapunov solution to derive bounds for the
entries of $S^{-1}_{:,t}={\rm vec}({\cal X}_t)$ for each $t=1, \ldots, n^2$. 
%
Let $j=\lfloor (t-1)/n\rfloor +1$, $i=t-n\lfloor (t-1)/n\rfloor $.
Since $M$ is positive definite, the solution can be written as (see, e.g., \cite{Horn.Johnson.91})
\begin{eqnarray*}
{\cal X}_t & = &
\frac{1}{2\pi}
\int_{-\infty}^{\infty} (\imath \omega I + M)^{-1} {\cal E}_t 
(\imath \omega I + M)^{-*} {\rm d}\omega\\
& =&
\frac{1}{2\pi}
\int_{-\infty}^{\infty} (\imath \omega I + M)^{-1} e_i e_j^\top
(\imath \omega I + M)^{-*} {\rm d}\omega
\equiv
\frac{1}{2\pi}
\int_{-\infty}^{\infty} z_i z_j^* {\rm d}\omega
\end{eqnarray*}
where $z_i= (\imath \omega I + M)^{-1} e_i$.
We are interested in estimating the $k$-th entry of the $t$-th column of the inverse $S^{-1}$.
Using 

\begin{figure}[htb]
\centering
\includegraphics[width=2.5in,height=2.5in]{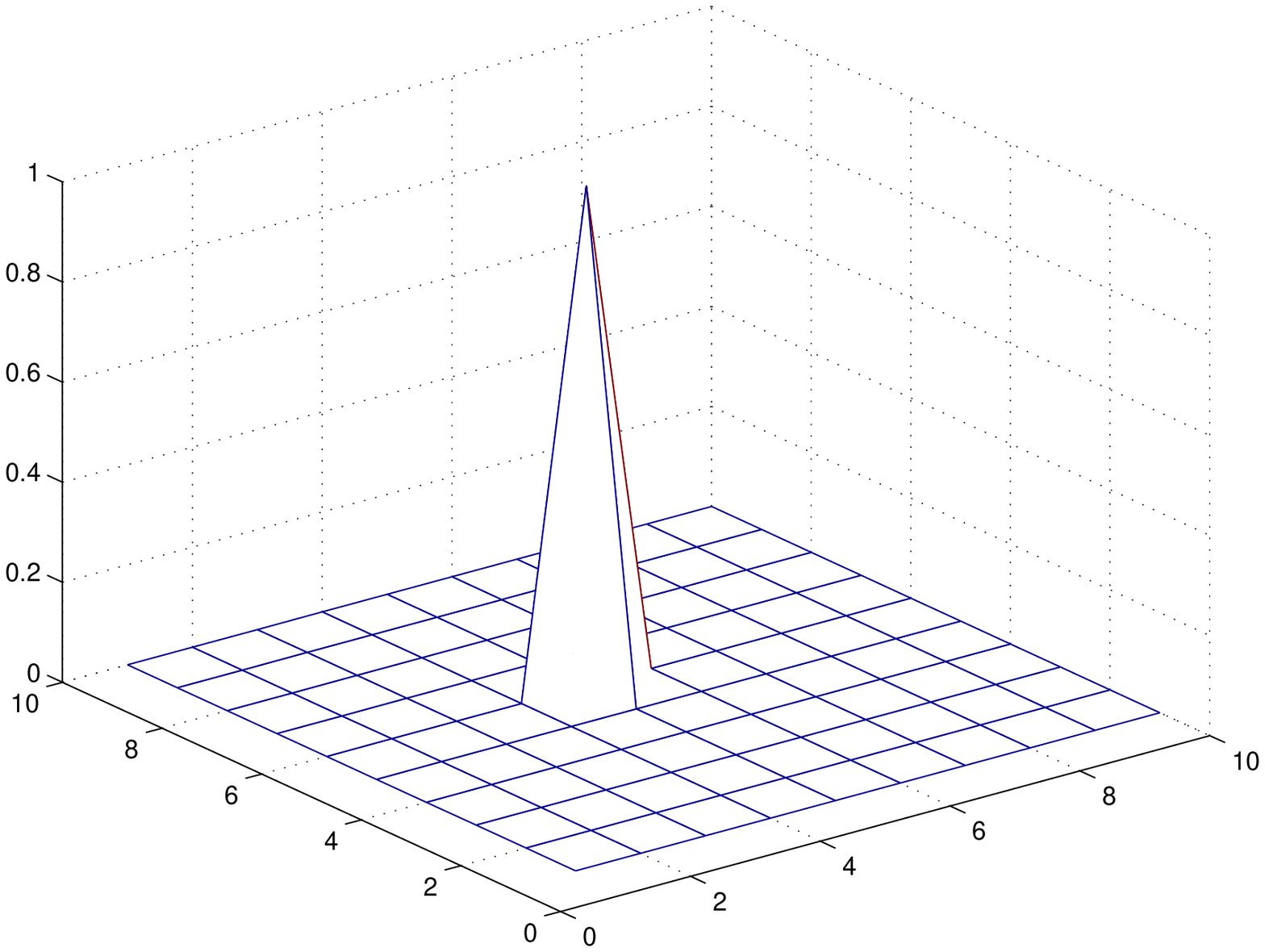}
\includegraphics[width=2.5in,height=2.5in]{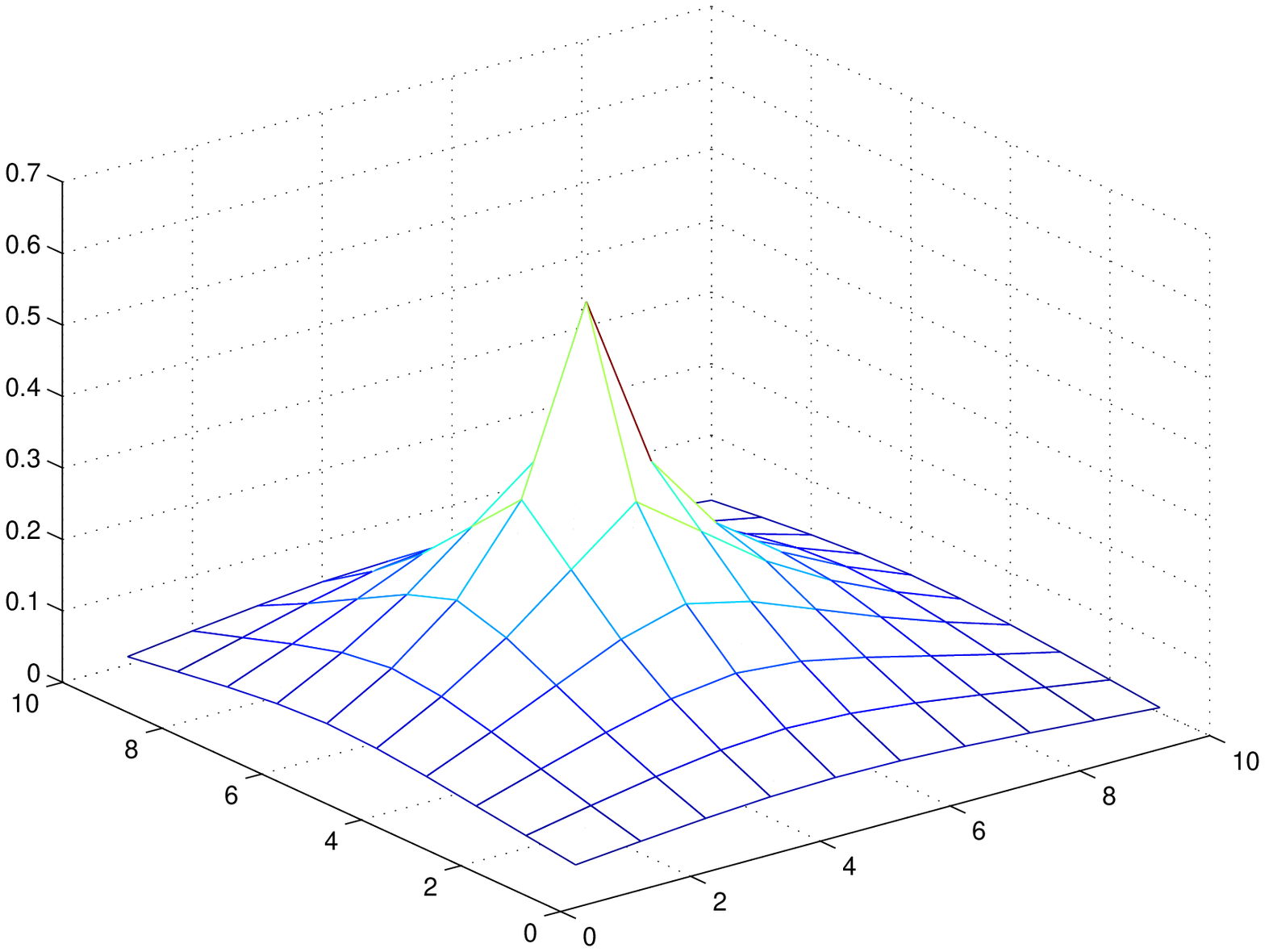}
\caption{Left: Column 46 of the $100\times 100$
identity matrix, represented on a $10\times 10$ grid. Right:
Column 46 of the inverse Laplacian, represented on a $10\times 10$ grid.\label{fig:lyapsol}}
\end{figure}

\begin{eqnarray}\label{eqn:indices}
m=\lfloor (k-1)/n\rfloor+1, \quad \ell=k-n \lfloor (k-1)/n\rfloor,
\end{eqnarray}
 this corresponds
to estimating the entry $({\cal X}_t)_{\ell, m} = e_\ell^\top {\cal X}_t e_m$ of 
${\cal X}_t$,
that is
$$
(S^{-1})_{k,t} = 
(S^{-1})_{\ell+n(m-1),t} = 
e_\ell^\top {\cal X}_t e_m,  \quad \ell, m\in \{1, \ldots, n\} .
$$
By varying $m, \ell \in \{1, \ldots, n\}$ all the elements of the $t$-th
column, $(S^{-1})_{:,t}$ are obtained.
We have
$$
e_\ell^\top {\cal X}_t e_m  = 
\frac{1}{2\pi}
\int_{-\infty}^{\infty} e_\ell^\top z_i(\omega) z_j(\omega)^*e_m {\rm d}\omega ,
$$
so that
\begin{eqnarray}\label{eqn:Xlm}
|e_\ell^\top {\cal X}_t e_m|  \le 
\frac{1}{2\pi}
\int_{-\infty}^{\infty} |e_\ell^\top z_i(\omega)|\, |z_j(\omega)^*e_m|\, {\rm d}\omega.
\end{eqnarray}
Since $e_\ell^\top z_i(\omega) = e_\ell^\top (\imath \omega I + M)^{-1} e_i$,
the first term in the integrand above is the absolute value of
the $(\ell, i)$ entry  of the inverse of tridiagonal matrix
$(\imath \omega I + M)$. In the following we shall bound each of the two
integrand terms, and then we will estimate the obtained integral.

Let $\lambda_{\min}, \lambda_{\max}$ be the extreme eigenvalues of $M$,
and let $\lambda_1 = \lambda_{\min} + \imath \omega$, 
$\lambda_2 = \lambda_{\max} + \imath \omega$. 
The matrix $\imath \omega I + M$ is a purely imaginary shifted version of the tridiagonal
matrix $M$, and its inverse shows a decreasing pattern, in spite of the
complex shift. 
While estimates for $|e_\ell^\top (\imath \omega I + M)^{-1} e_i|$ are well known
for $\omega=0$ (see, e.g., \cite{SDWFMPWS84,Nabben.99,MeurantJul.1992}), upper bounds for $\omega \ne 0$
are less so. 
Upper bounds for $|e_\ell^\top (\imath \omega I + M)^{-1} e_i|$, $\omega\ne 0$ were
given by Freund in \cite[Theorem 6]{Freund1989a}, and we recall
this result for future reference.

\begin{proposition}\label{prop:Freund}
Assume $M$ is symmetric positive definite and $b$-banded. Let 
$a=(\lambda_1+\lambda_2)/(\lambda_2-\lambda_1)$, and $R>1$ be defined as
$R=\alpha + \sqrt{\alpha^2-1}$, 
with $\alpha=(|\lambda_1|+|\lambda_2|)/|\lambda_2-\lambda_1|$.
Then
$$
|e_\ell^\top (\imath \omega I + M)^{-1}e_i| \le \frac{2R}{|\lambda_1-\lambda_2|}
B(a) \left (\frac{1}{R}\right )^{\frac{|\ell -i|}{b}}, \quad \ell\ne i,
$$
where, writing 
$a=\alpha_R\cos(\psi)+\imath \beta_R\sin(\psi)$,
$$
B(a) := \frac {R}{   \beta_R\sqrt{\alpha_R^2-\cos^2(\psi)}(\alpha_R +
\sqrt{\alpha_R^2-\cos^2(\psi)})},
$$
with $\alpha_R=\frac 1 2 (R+\frac 1 R)$ and
$\beta_R=\frac 1 2 (R-\frac 1 R)$.
\end{proposition}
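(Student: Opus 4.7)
My plan is to reduce the problem to uniform polynomial approximation of $1/\mu$ on the complex segment $\Sigma = [\lambda_1,\lambda_2] = \{\lambda + \imath \omega : \lambda \in [\lambda_{\min},\lambda_{\max}]\}$, following the classical Demko--Moss--Smith strategy adapted to the purely imaginary shift. The starting observation is that since $M$ is symmetric, $A := \imath\omega I + M$ is a normal matrix whose spectrum lies on $\Sigma$; in particular, for any polynomial $p_k$ of degree $k$, one has the spectral-radius identity
$$
\|A^{-1} - p_k(A)\|_2 = \max_{\mu \in \Sigma}\, |\mu^{-1}-p_k(\mu)|.
$$
Moreover, because $A$ inherits the $b$-bandedness of $M$, the matrix $p_k(A)$ is $(kb)$-banded, hence its $(\ell,i)$-entry vanishes whenever $k b < |\ell-i|$. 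Therefore
$$
|e_\ell^\top A^{-1} e_i| \; \le \; \min_{k b < |\ell-i|}\;\min_{\deg p_k \le k}\;\max_{\mu \in \Sigma}\,|\mu^{-1}-p_k(\mu)|,
$$
and the whole task becomes one of Chebyshev-type approximation of $1/\mu$ on the complex segment $\Sigma$.

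Next I would normalize $\Sigma$ to $[-1,1]$ by $\mu = \tfrac12(\lambda_2-\lambda_1)\xi + \tfrac12(\lambda_1+\lambda_2)$, so that $1/\mu = \tfrac{2}{\lambda_2-\lambda_1}\cdot\tfrac{1}{\xi+a}$ with $a = (\lambda_1+\lambda_2)/(\lambda_2-\lambda_1)$. The pole in the new variable sits at $\xi_0 = -a$, which lies off $[-1,1]$ because $M$ is positive definite. Applying the Joukowski map $\xi = \tfrac12(w+w^{-1})$, the Bernstein ellipse $\mathcal{E}_R = \{\tfrac12(w+w^{-1}) : |w| = R\}$ has semi-axes $\alpha_R = \tfrac12(R+R^{-1})$ and $\beta_R = \tfrac12(R-R^{-1})$; the condition $\xi_0 \in \mathcal{E}_R$ reads exactly $a = \alpha_R\cos\psi + \imath\beta_R\sin\psi$ for some phase $\psi$, which pins down $R$ in terms of $a$. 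A short computation on $|w_0|$, where $w_0$ is the image of $\xi_0$ outside the unit disk, yields the closed-form expression $R = \alpha + \sqrt{\alpha^2-1}$ with $\alpha = (|\lambda_1|+|\lambda_2|)/|\lambda_2-\lambda_1|$.

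To obtain an explicit polynomial approximant I would expand $1/(\xi - \xi_0)$ in Faber (equivalently, Chebyshev) polynomials of $[-1,1]$: writing $\xi = \tfrac12(w+w^{-1})$ and $\xi_0 = \tfrac12(w_0 + w_0^{-1})$ with $|w_0| = R > 1$, one gets the convergent series
$$
\frac{1}{\xi - \xi_0} \;=\; -\frac{2}{w_0 - w_0^{-1}} \sum_{k \ge 0} w_0^{-k}\, T_k(\xi)\, c_k,
$$
where $c_k$ are harmless normalization constants. Truncating after $k$ terms and estimating the tail on $[-1,1]$ gives a supremum error that decays like $R^{-k}$, multiplied by a geometric prefactor controlled by the distance from $\xi_0$ to $\mathcal{E}_R$; packaging this distance with the normalization $2/|\lambda_2-\lambda_1|$ produces precisely the constant $B(a)$ and the leading factor $2R/|\lambda_1-\lambda_2|$ in the statement. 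Finally I would choose $k = \lfloor |\ell-i|/b \rfloor - 1$, plug into the bandedness bound, and arrive at the claimed estimate.

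The main obstacle, and what distinguishes this from the real self-adjoint Demko case, is the careful bookkeeping of the complex pole. In the real setting one only needs the distance of the pole to the interval, but here $\xi_0$ is genuinely complex and its position on a Bernstein ellipse — captured by the phase $\psi$ and the combination $\sqrt{\alpha_R^2 - \cos^2\psi}$ inside $B(a)$ — must be tracked to avoid losing a factor that blows up as $\omega \to 0$ or as $a$ approaches the real axis. This is exactly the point where Freund's residue/Faber-coefficient analysis is delicate, and it would constitute the technical heart of the proof.
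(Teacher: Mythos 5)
First, note that the paper does not prove this proposition at all: it is imported verbatim from Freund \cite{Freund1989a} (his Theorem~6), so there is no in-paper argument to compare against, only Freund's original one. Your outline does follow the correct route, which is exactly the Demko--Moss--Smith/Freund strategy: $A=\imath\omega I+M$ is normal with spectrum contained in the segment $\Sigma=[\lambda_1,\lambda_2]$, so $|e_\ell^\top(A^{-1}-p_k(A))e_i|\le\max_{\mu\in\Sigma}|\mu^{-1}-p_k(\mu)|$ (this should be an inequality, not the equality you wrote, since the spectrum is a finite subset of $\Sigma$), bandedness kills the entry of $p_k(A)$ for $kb<|\ell-i|$, and the problem reduces to Chebyshev/Faber approximation of $1/(\xi+a)$ on $[-1,1]$ with a complex pole. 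Your identification of $R$ is also correct and can be made one line long: with $\xi_0=-a$ one has $\xi_0\mp1=-2\lambda_{2,1}/(\lambda_2-\lambda_1)$, so the sum of distances to the foci is $|\xi_0-1|+|\xi_0+1|=2\alpha$, which forces $|w_0|+|w_0|^{-1}=2\alpha$, i.e.\ $R=\alpha+\sqrt{\alpha^2-1}$.

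The genuine gap is the derivation of the constant, which is the only nontrivial content of the statement beyond the generic band-matrix argument, and which you assert ("packaging this distance \dots produces precisely the constant $B(a)$") and then explicitly defer. Two concrete points. First, the factor $\sqrt{\alpha_R^2-\cos^2\psi}$ enters through the identity $|w_0-w_0^{-1}|=2\sqrt{\alpha_R^2-\cos^2(\psi)}$ for $w_0=Re^{\imath\psi}$ (using $\alpha_R^2-\beta_R^2=1$); this is the prefactor of the Chebyshev series of $1/(\xi-\xi_0)$ and you never extract it. Second, and more seriously, the tail estimate you sketch --- bounding $|T_k(\xi)|\le1$ and summing the geometric series --- yields a denominator of order $\sqrt{\alpha_R^2-\cos^2\psi}\,(R-1)$, whereas Freund's $B(a)$ has denominator $\beta_R\sqrt{\alpha_R^2-\cos^2\psi}\,(\alpha_R+\sqrt{\alpha_R^2-\cos^2\psi})\sim R^2/2$ for large $R$; the two differ by roughly a factor $R$, so the triangle-inequality tail bound does \emph{not} reproduce the stated constant. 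To get $B(a)$ one must first sum the tail $\sum_{k>n}w_0^{-k}T_k(\xi)$ in closed form (via $T_k(\xi)=\tfrac12(w^k+w^{-k})$ on $|w|=1$, giving two geometric series) and only then take the supremum over $[-1,1]$. That closed-form summation is the technical heart you acknowledge omitting, so as written the proposal establishes a bound of the right shape $C\,R^{-|\ell-i|/b}$ but not the proposition as stated.
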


Clearly, $R=R(\omega)$. We omit this explicit dependence in the following.
Figure \ref{fig:Freund} reports two typical behaviors of the bound in
Proposition \ref{prop:Freund}, for the pentadiagonal matrix in Example
\ref{ex:penta}. The plots refer to $\omega=0.10$ (left) and $\omega=10$ (right): while
the bound accurately captures the slope for large $\omega$, this is in
general less so for small $\omega$.
This difference in accuracy in general
may affect the accuracy of our estimates, especially when 
a bandwidth $b$ greater than one is used (here $b=2$).

\begin{figure}[htb]
\centering
\includegraphics[width=2.5in,height=2.5in]{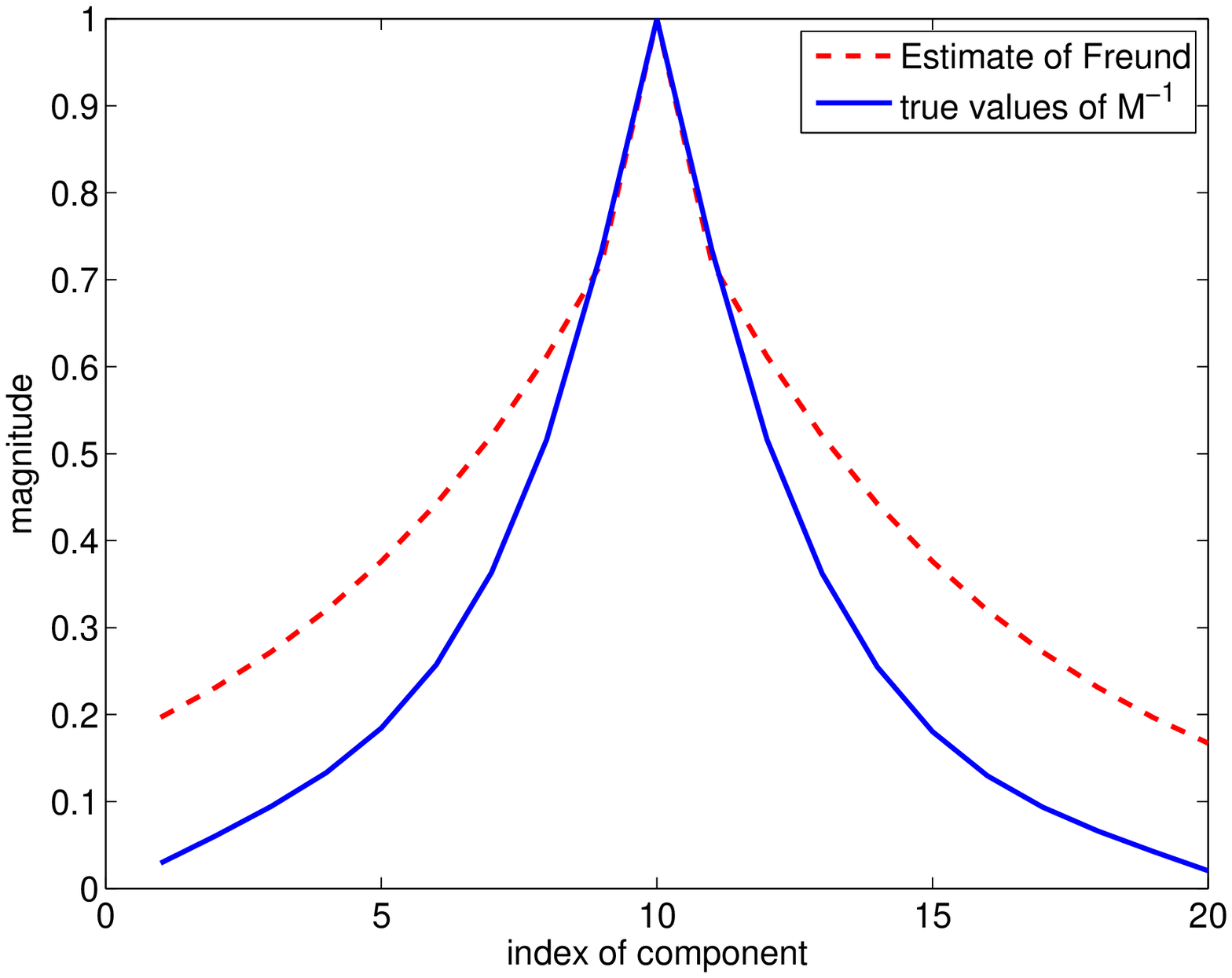}
\includegraphics[width=2.5in,height=2.5in]{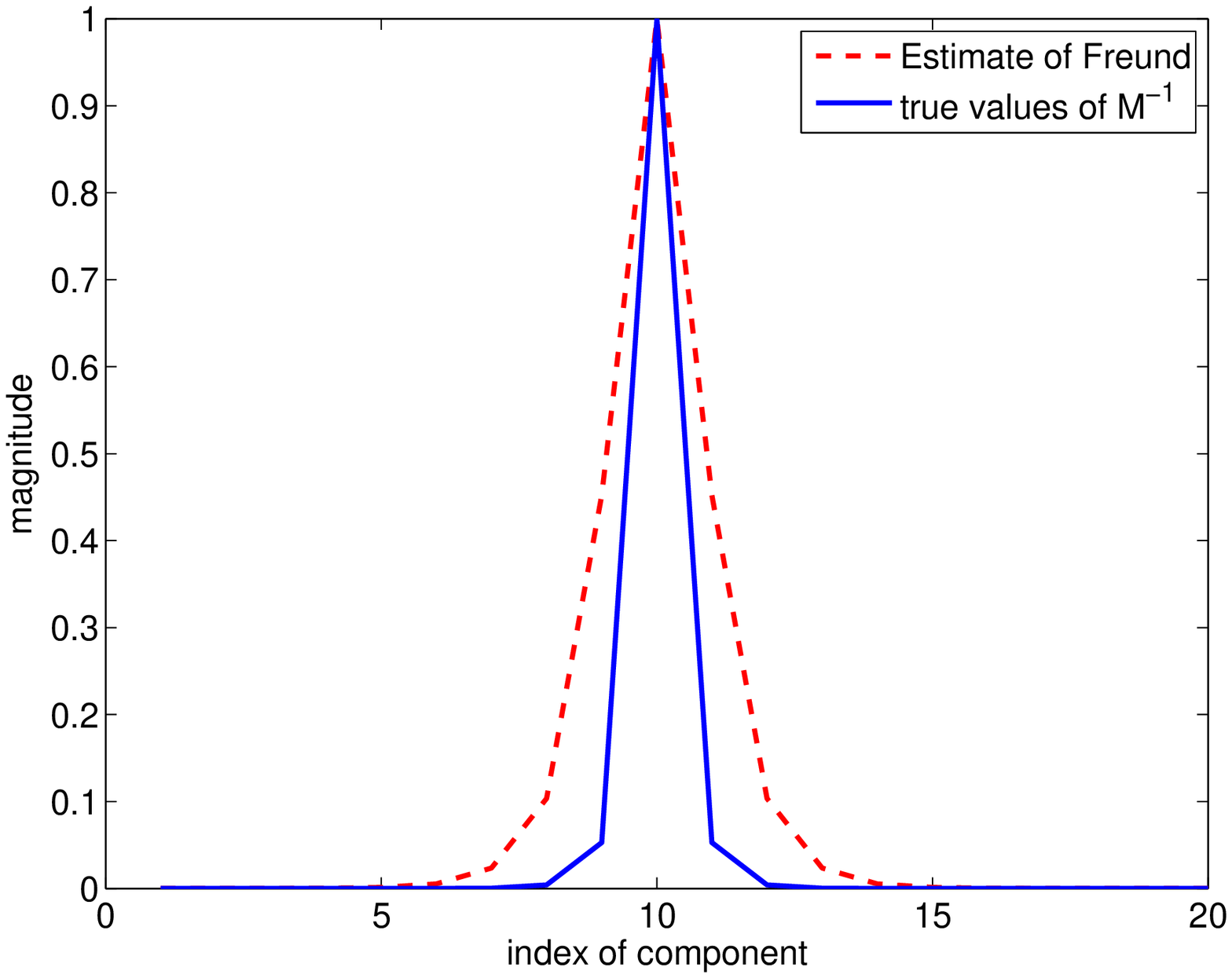}
\caption{Typical estimate of Proposition \ref{prop:Freund} (column 10)
for the inverse of the pentadiagonal matrix of Example \ref{ex:penta}.
Left: $\omega=0.1$. Right: $\omega=10$.
\label{fig:Freund}}
\end{figure}

Since in this section we assume that $M$ is tridiagonal, we shall use the result above for $b=1$;
the case $b>1$ is explored in section \ref{sec:gen}.
We prove our bound in two steps. In the first step (Proposition \ref{prop:step1}), 
we estimate the entries
in terms of an integral, which can be easily estimated numerically; the
results appear to be quite accurate in our examples.
In the second step (Proposition \ref{prop:step2}), we complete the upper bound by estimating the
integrals, thus incurring in additional inaccuracies. 
The final bound (Propositions \ref{prop:step2}-\ref{prop:final})
should be considered as a qualitative estimate for the entries pattern.

\begin{proposition}\label{prop:step1}
 For $k, t\in \{1, \ldots, n^2\}$,
let 
$$
j=\lfloor (t-1)/n\rfloor+1,\quad i=t-n \lfloor (t-1)/n\rfloor ,
$$
 and $\ell, m$ as in (\ref{eqn:indices}).
With the notation above, the following holds.

i) If $i\ne \ell$ and $j\ne m$, then
$$
|(S^{-1})_{k,t}|  
\le
\frac{1}{2\pi}
\frac{64}{|\lambda_{\max}-\lambda_{\min}|^2} \int_{-\infty}^{\infty} 
 \left ( \frac{R^2}{(R^2-1)^2}\right )^2
\left ( \frac{1}{R}\right )^{|i-\ell|+|j-m|-2} {\rm d}\omega ;
$$ 
ii) If either $i= \ell$  or $j= m$, then
$$
|(S^{-1})_{k,t}|  
\le
\frac{1}{2\pi}
\frac{8}{|\lambda_{\max}-\lambda_{\min}|} \int_{-\infty}^{\infty} 
\frac 1 {\sqrt{\lambda_{\min}^2 + \omega^2}}
 \frac{R^2}{(R^2-1)^2}
\left ( \frac{1}{R}\right )^{|i-\ell|+|j-m|-1} {\rm d}\omega ;
$$ 
iii) If both $i= \ell$  and $j= m$, then
$$
|(S^{-1})_{k,t}|  
\le
\frac{1}{2\pi}
\int_{-\infty}^{\infty} \frac 1 {\lambda_{\min}^2 + \omega^2} {\rm d}\omega = \frac 1 {2\lambda_{\min}}.
$$
\end{proposition}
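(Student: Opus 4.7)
The plan is to start from the integral bound (\ref{eqn:Xlm}) and estimate the two integrand factors $|e_\ell^\top z_i(\omega)|$ and $|z_j(\omega)^* e_m|$ pointwise in $\omega$. Because $M$ is real symmetric, $(\imath\omega I+M)^{-*}=(M-\imath\omega I)^{-1}$, so the second factor is $|e_j^\top(M-\imath\omega I)^{-1}e_m|$, which has the same structure as the first up to the sign of $\omega$. Since Proposition \ref{prop:Freund} depends on $\omega$ only through $|\lambda_1|$ and $|\lambda_2|$ (hence through $\omega^2$), both factors admit the same bound whenever the corresponding row/column indices differ. The three cases in the statement simply correspond to whether zero, one, or two of these factors happen to be diagonal entries of $(\imath\omega I+M)^{-1}$, each calling for a different pointwise bound.

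For an \emph{off-diagonal} factor I would apply Proposition \ref{prop:Freund} with $b=1$ and $|\lambda_1-\lambda_2|=|\lambda_{\max}-\lambda_{\min}|$. The key technical step is to bound the Freund constant $B(a)$ uniformly in the angle $\psi$: minimizing the denominator $\beta_R\sqrt{\alpha_R^2-\cos^2\psi}(\alpha_R+\sqrt{\alpha_R^2-\cos^2\psi})$ at $\cos^2\psi=1$ and using the identities $\alpha_R+\beta_R=R$ and $\beta_R=(R^2-1)/(2R)$, one should obtain
\[
B(a)\;\le\;\frac{R}{\beta_R^2(\alpha_R+\beta_R)}\;=\;\frac{4R^2}{(R^2-1)^2}.
\]
This converts Freund's bound into the clean $\omega$-dependent estimate $\dfrac{8}{|\lambda_{\max}-\lambda_{\min}|}\,\dfrac{R^2}{(R^2-1)^2}(1/R)^{|\ell-i|-1}$ for $\ell\ne i$. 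For a \emph{diagonal} factor ($\ell=i$ or $m=j$) I would instead use the trivial operator-norm estimate $|e^\top(\imath\omega I+M)^{-1}e|\le\|(\imath\omega I+M)^{-1}\|_2\le 1/\sqrt{\lambda_{\min}^2+\omega^2}$, which follows from the fact that the eigenvalues of $\imath\omega I+M$ are $\mu+\imath\omega$ with $\mu\ge\lambda_{\min}>0$.

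Multiplying the two factor bounds according to the case and plugging the result into (\ref{eqn:Xlm}) then yields parts (i) and (ii) with exactly the claimed constants; part (iii) has both factors diagonal, so the integrand reduces to $1/(\lambda_{\min}^2+\omega^2)$, which integrates in closed form to $\pi/\lambda_{\min}$ and produces $1/(2\lambda_{\min})$ after the $1/(2\pi)$ prefactor. The main obstacle I anticipate is the $\psi$-uniform bound on $B(a)$: one has to check that $\cos^2\psi=1$ is indeed the worst case over all admissible $\omega$, and then carry out the algebraic simplification of $\beta_R^2(\alpha_R+\beta_R)$ to the compact form $(R^2-1)^2/(4R^2)$. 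Once this lemma-like estimate is in hand, the rest is routine bookkeeping of the constants $2R/|\lambda_{\max}-\lambda_{\min}|$ and the exponents of $1/R$ in each of the three cases.
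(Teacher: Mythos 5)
Your proposal is correct and follows essentially the same route as the paper's own proof: bound $B(a)$ by $1/\beta_R^2=4R^2/(R^2-1)^2$ via $\cos^2\psi\le 1$ and $\alpha_R+\beta_R=R$, use the resolvent-norm bound $1/\sqrt{\lambda_{\min}^2+\omega^2}$ for the coincident-index factors, and multiply inside (\ref{eqn:Xlm}). The only (welcome) extra detail is your explicit remark that the second factor $(\imath\omega I+M)^{-*}$ admits the same Freund bound because the estimate depends on $\omega$ only through $\omega^2$; the $\psi$-uniformity you worried about is immediate since $\alpha_R>1$ for $R>1$.
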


\begin{proof}
To prove i),
we recall that $|(S^{-1})_{k,t}|  = |e_\ell^\top {\cal X}_t e_m|$ so that
(\ref{eqn:Xlm}) holds, and we notice 
that
$\alpha_R^2-1 = \beta_R^2$ and $\alpha_R+\beta_R=R$. Moreover,
$\frac{1}{\sqrt{\alpha_R^2-\cos^2(\psi)}} \le
1/(\sqrt{\alpha_R^2-1})$. Therefore,
\begin{eqnarray*}
B(a) \le \frac{R}{\beta_R\sqrt{\alpha_R^2-1}(\alpha_R+\sqrt{\alpha_R^2 -1})} =
\frac{R}{\beta_R^2(\alpha_R+\beta_R)} = \frac{1}{\beta_R^2}, 
\end{eqnarray*}
so that, using Proposition \ref{prop:Freund},
$$
|e_\ell^\top (\imath \omega I + M)^{-1}e_i| \le \frac{2R}{|\lambda_1-\lambda_2|}
\frac{4R^2}{(R^2-1)^2} \left (\frac 1 R\right )^{|\ell -i|}.
$$
Substituting the estimate for each of the two integrand terms in (\ref{eqn:Xlm}), we obtain
$$
|e_\ell^\top {\cal X}_t e_m| \le
\frac{1}{2\pi}
\frac{64}{|\lambda_1-\lambda_2|^2} \int_{-\infty}^{\infty} 
R^2 
\frac{R^2}{(R^2-1)^2} 
\frac{R^2}{(R^2-1)^2} 
\left (\frac 1 R\right )^{|\ell -i|}
\left (\frac 1 R\right )^{|m -j|} {\rm d}\omega,
$$
from which the result follows.

As of ii) we only need to notice that if, for instance, $\ell=i$, then
\begin{eqnarray}\label{eqn:equal}
|e_\ell^\top (\imath \omega I + M)^{-1} e_i| \le \frac 1 {|\lambda_{\min}+\imath \omega|}.
\end{eqnarray}
Substituting into the integral, the bound follows as in the previous case.

For the case iii), the bound (\ref{eqn:equal}) can be used for both pairs of
indices, and the final bound follows.
\end{proof}

We next report on a few examples showing the quality of the estimates in
Proposition~\ref{prop:step1}. As one might expect, the factor in front of
the integral slightly deteriorates the estimate, while qualitatively
the decay of the entries in the inverse matrix is perfectly captured. 
We observe that the bounds ii) and iii) can be very loose because of
the estimate's weakness in (\ref{eqn:equal}). This can be clearly observed in the
numerical experiments, below, where the inaccuracy of our estimate is
more pronounced in correspondence with the highest peaks, namely for
$\ell=i$ and/or $m=j$.
In all examples, the matrix $M$ was scaled by its diagonal,
so as to have entries all not greater than one.
Technically, the integral appearing in the bound was estimated
using the adaptive Gauss-Kronrod quadrature rule
(Matlab function {\tt quadgk}).

\begin{figure}[hbt]
\centering
\includegraphics[width=2.5in,height=2.5in]{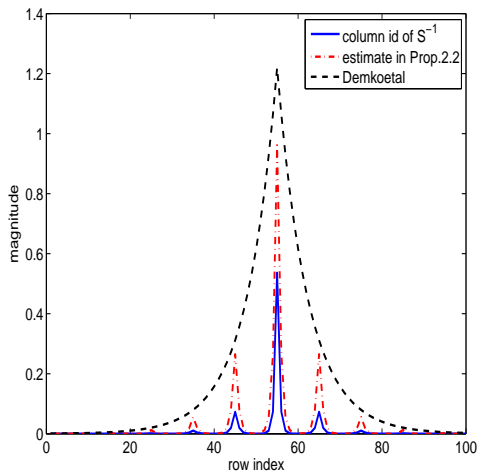}
\caption{Example \ref{ex:0}. Values of column $t=55$ of $S^{-1}$ (solid), estimates for that
column as in Prop.\ref{prop:step1} (dashed), and classical bound in \cite{SDWFMPWS84} (dash-dotted).
\label{fig:ex0}}
\end{figure}

\vskip 0.1in
\begin{example}\label{ex:0}
{\rm
We consider the symmetric diagonally dominant matrix
$$
M = {\rm tridiag}(-0.5,\underline{2},-0.5) \in\RR^{10\times 10}.
$$
As a sample, we consider column $t=55$ (corresponding to the node at
$i=5$ and $j=4$ in the reference grid), and we compute the upper estimate 
for $(S^{-1})_{:,t}$ as the row index varies. 
Figure \ref{fig:ex0} shows the accuracy of the estimates in Proposition~\ref{prop:step1} (dashed
curve), compared with the actual values (solid curve) of column 55.
The estimate is able to capture the highly oscillating decay of the entries
of $S^{-1}$ although, as already mentioned, the peaks are somewhat overestimated.
For completeness, the bound (\ref{eqn:demko}) from \cite{SDWFMPWS84} is also reported; for this
bound, we took into account that $S$ has bandwidth $b=n=10$.
We observe that this classical bound provides a good envelope of the actual decay, although,
as expected, it misses the oscillation pattern. We also note that the classical bound matches the
peaks of our new bound, showing that the classical
predicted decay is obtained for either $i=\ell$ or $j=m$, corresponding to the
rows and columns in the grid most slowing decaying (see also Figure \ref{fig:lyapsol}).
}
\end{example}

\begin{figure}[hbt]
\centering
\includegraphics[width=2.5in,height=2.5in]{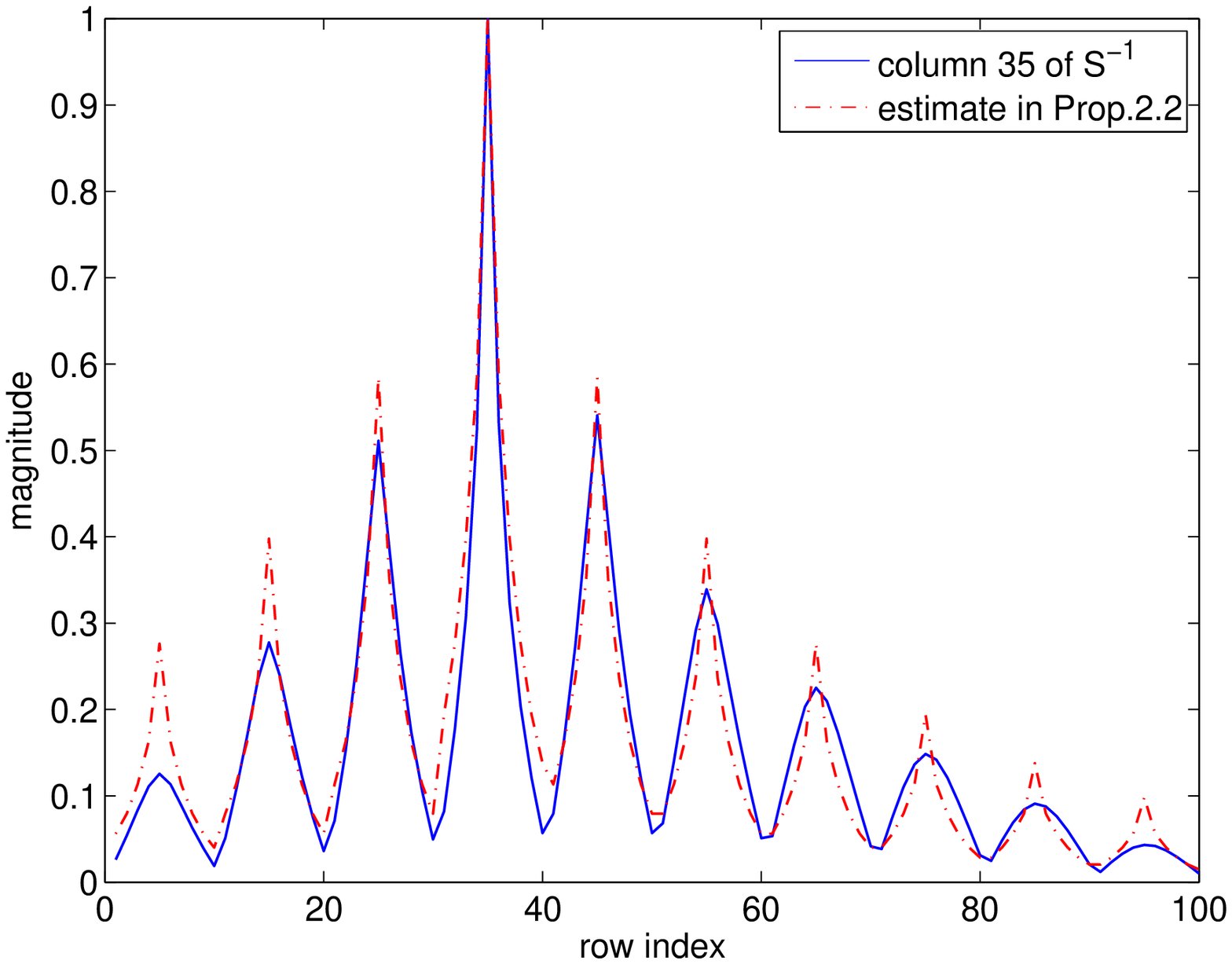}
\includegraphics[width=2.5in,height=2.5in]{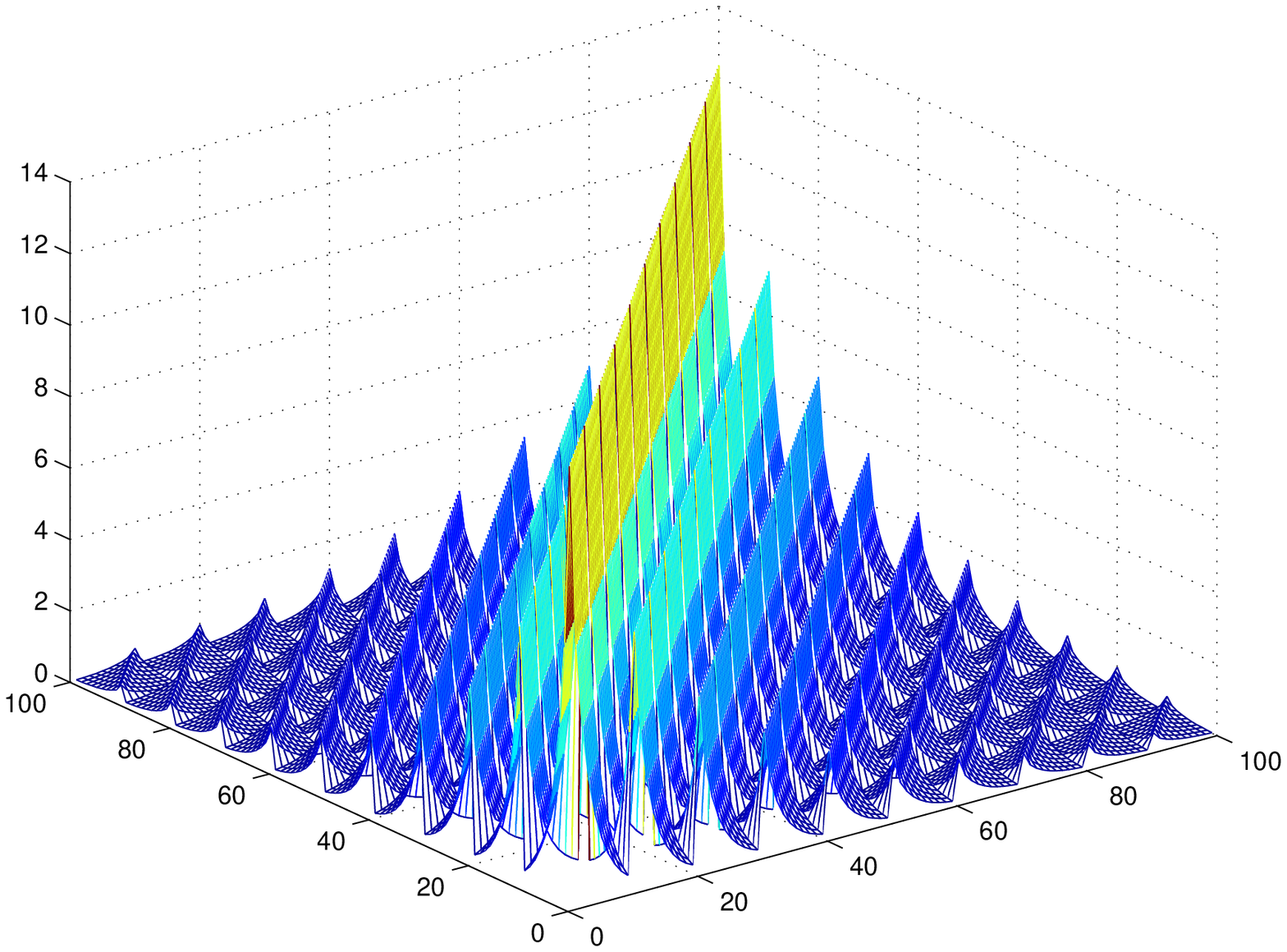}
\caption{Example \ref{ex:1}. 
Left: Components of column $t=35$ of the inverse of the 2D Laplace 
$100\times 100$ matrix in the unit square, and its estimate from 
Proposition \ref{prop:step1} 
(all curves are scaled by the values of the corresponding diagonal).
Right: upper bounds for all entries of the inverse (cf. with Figure \ref{fig:laplinv})
\label{fig:estimate_Lapl}}
\end{figure}

In the following examples, similar plots are shown, where however
all curves are scaled by the value of the
corresponding diagonal, so that the maximum value of the
column in one.

\vskip 0.1in
\begin{example}\label{ex:1}
{\rm
We consider the two-dimensional Laplacian with Dirichlet boundary conditions,
discretized by centered finite differences with a 5-point stencil, so that
$M = {\rm tridiag}(-1,\underline{2},-1)\in\RR^{10\times 10}$ and $S$ is of size $100$.
In Figure \ref{fig:estimate_Lapl} we report the values of $(S^{-1})_{:,35}$ (solid blue line),
and those of the corresponding upper bound in Proposition \ref{prop:step1}.
The agreement with the actual behavior of the column entries is
very good. Similar plots can be observed for the other columns of $S^{-1}$.
}
\end{example}

\vskip 0.1in
\begin{example}\label{ex:2}
{\rm
The second example stems from the discretization of the same operator as
in Example \ref{ex:1}, but in terms of the tensorized
Babuska-Shen basis, which uses Legendre polynomials.
The corresponding symmetric
matrix (for even degrees) is given by (see, e.g., \cite{Canuto.Simoncini.Verani.13tr}
and references therein)
$M = {\rm tridiag} ( \delta_k, \underline{\gamma_k}, \delta_k)$, where
\begin{eqnarray*}
\gamma_k&=&\frac{2}{(4k-3)(4k+1)} , \quad k=1, \ldots, n, \quad {\rm and}  \\
\delta_k&=&\frac{-1}{(4k+1)\sqrt{(4k-1)(4k+3)}}, \quad k=1, \ldots,n-1 .
\end{eqnarray*}
%
The plot of
Figure \ref{fig:legendre} reports the actual values of $(S^{-1})_{:,35}$ and
their estimates according to Proposition \ref{prop:step1}. 
Once again, the bounds appear to be fully descriptive of the 
entry pattern. 
}
\end{example}

\begin{figure}[hbt]
\centering
\includegraphics[width=2.5in,height=2.5in]{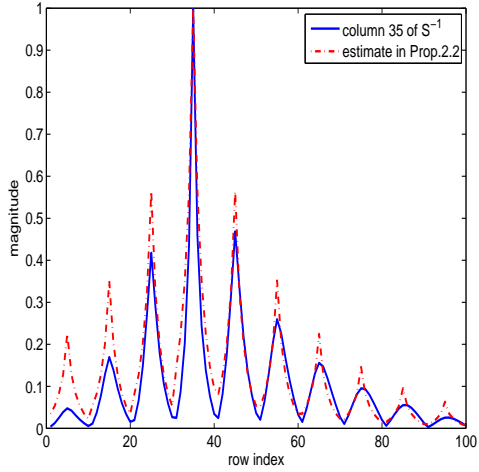}
\caption{Example \ref{ex:2}.
 Components of column $t=35$ of the inverse of the Legendre stiffness matrix
of size
$100\times 100$, and its estimate from Proposition \ref{prop:step1}.
(all curves are scaled by the values of the corresponding diagonal).
\label{fig:legendre}}
\end{figure}

 The results of Proposition \ref{prop:step1} can be manipulated to 
provide, for each $t$, more explicit estimates on the 
entries of $(S^{-1})_{:,t}$; more precisely, they are expressed in terms of the
`` index distance'' $|\ell - i| + |m-j|$.

\begin{proposition}\label{prop:step2}
For $k, t \in \{1, \ldots, n^2\}$ let
$j=\lfloor (t-1)/n\rfloor+1, i=t-n \lfloor (t-1)/n\rfloor$ and
$\ell, m$ as in (\ref{eqn:indices}).
\begin{enumerate}
{
\item[i)] Let ${\mathfrak n}_2:=|\ell - i| + |m-j|-2>0$. If $\ell\ne i$, $m\ne j$ then  
\begin{equation}\label{aux:1}
|(S^{-1})_{k,t}|
\le
\frac{1}{2\sqrt{2}}
\frac{(\lambda_{\max} - \lambda_{\min})^{{\mathfrak n}_2+2}}{(\lambda_{\max}^2 +\lambda_{\min}^2)^{{\mathfrak n}_2/2}} 
\frac {\sqrt{\lambda_{\max}^2+\lambda_{\min}^2}} {(\lambda_{\max}\lambda_{\min})^2}
\frac 1 {\sqrt{{\mathfrak n}_2}} \sqrt{\frac{2{\mathfrak n}_2}{{\mathfrak n}_2+4}}.
\end{equation}

\item[ii)] Let ${\mathfrak n}_1:=|\ell - i| + |m-j|-1>0$. If either $\ell = i$ or $m=j$ and 
${\mathfrak n}_1>0$ then
\begin{equation}\label{aux:2}
|(S^{-1})_{k,t}|
\le
\frac{1}{2\sqrt{2}}
\frac{(\lambda_{\max} - \lambda_{\min})^{{\mathfrak n}_1+1}}{(\lambda_{\max}^2 +\lambda_{\min}^2)^{{\mathfrak n}_1/2}} 
\frac {\sqrt{\lambda_{\max}^2+\lambda_{\min}^2}} {\lambda_{\max}\lambda_{\min}^2}
\frac 1 {\sqrt{{\mathfrak n}_1}} \sqrt{\frac{2{\mathfrak n}_1}{{\mathfrak n}_1+2}}.
\end{equation}
}

%
\end{enumerate}
\end{proposition}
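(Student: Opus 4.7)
The plan is to convert the integral inequalities of Proposition~\ref{prop:step1} into closed form through three steps: (a) simplify the integrand algebraically; (b) replace $R$ and $\alpha^2-1$ by elementary lower bounds in $\omega$; and (c) evaluate the resulting one-dimensional integral. For step~(a), since $R=\alpha+\sqrt{\alpha^2-1}$ gives $R-1/R=2\sqrt{\alpha^2-1}$, one has $(R^2-1)^2/R^2=(R-1/R)^2=4(\alpha^2-1)$ and hence $R^2/(R^2-1)^2=1/(4(\alpha^2-1))$; the integrand in case~i) reduces to $[16(\alpha^2-1)^2\,R^{{\mathfrak n}_2}]^{-1}$ and in case~ii) to $[4(\alpha^2-1)\,R^{{\mathfrak n}_1}\sqrt{\lambda_{\min}^2+\omega^2}]^{-1}$.

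For step~(b), I would use two separate lower bounds. Because $\alpha^2-1$ is even and monotone in $|\omega|$, its infimum is attained at $\omega=0$, producing the uniform estimate
\[
\alpha^2-1\ \ge\ \frac{4\lambda_{\min}\lambda_{\max}}{(\lambda_{\max}-\lambda_{\min})^2},
\]
which lets $(\alpha^2-1)^{-k}$ be pulled out of the integral as the constant $(\lambda_{\max}\lambda_{\min})^{-2}$ (or $(\lambda_{\max}\lambda_{\min}^2)^{-1}$ in case~ii)) appearing in (\ref{aux:1})--(\ref{aux:2}). For the decay in $\omega$ I would bound $R\ge\alpha$ and apply $\sqrt{x}+\sqrt{y}\ge\sqrt{x+y}$ to $|\lambda_1|+|\lambda_2|=\sqrt{\lambda_{\min}^2+\omega^2}+\sqrt{\lambda_{\max}^2+\omega^2}$, obtaining
\[
R\ \ge\ \frac{\sqrt{\lambda_{\min}^2+\lambda_{\max}^2+2\omega^2}}{\lambda_{\max}-\lambda_{\min}},
\qquad
R^{-{\mathfrak n}_j}\le\frac{(\lambda_{\max}-\lambda_{\min})^{{\mathfrak n}_j}}{(\lambda_{\min}^2+\lambda_{\max}^2+2\omega^2)^{{\mathfrak n}_j/2}}.
\]
The $(\lambda_{\min}^2+\lambda_{\max}^2)^{{\mathfrak n}_j/2}$ denominator in the target estimate will come from this estimate after the $\omega$-integration.

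Step~(c) then reduces the problem to bounding $\int_{-\infty}^\infty d\omega/(\lambda_{\min}^2+\lambda_{\max}^2+2\omega^2)^{{\mathfrak n}_j/2}$. The substitution $\omega=\sqrt{(\lambda_{\min}^2+\lambda_{\max}^2)/2}\,\tan\phi$ converts this into $\sqrt{(\lambda_{\min}^2+\lambda_{\max}^2)/2}\,(\lambda_{\min}^2+\lambda_{\max}^2)^{-{\mathfrak n}_j/2}\int_{-\pi/2}^{\pi/2}\cos^{{\mathfrak n}_j-2}\phi\,d\phi$, and the Wallis integral evaluates to $\sqrt{\pi}\,\Gamma(({\mathfrak n}_j-1)/2)/\Gamma({\mathfrak n}_j/2)$. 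A Cauchy--Schwarz-type inequality on the gamma function stemming from log-convexity of $\log\Gamma$, e.g.\ $\Gamma(({\mathfrak n}_j-1)/2)^2\le\Gamma({\mathfrak n}_j/2-1)\Gamma({\mathfrak n}_j/2)$, then bounds this gamma ratio by a quantity of order ${\mathfrak n}_j^{-1/2}$, and after collecting constants one recovers the prefactor $\tfrac{1}{2\sqrt{2}}\tfrac{1}{\sqrt{{\mathfrak n}_j}}\sqrt{2{\mathfrak n}_j/({\mathfrak n}_j+4)}$. Case~ii) follows the same template, the only new ingredient being the pointwise inequality $1/\sqrt{\lambda_{\min}^2+\omega^2}\le 1/\lambda_{\min}$, which is responsible for the extra $1/\lambda_{\min}$ factor in (\ref{aux:2}).

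The main obstacle I anticipate is not any single step but the careful bookkeeping of the numerical constants (powers of $2$, $\pi$, and $\lambda_{\max}-\lambda_{\min}$) and in particular the selection of the precise gamma-ratio inequality so that the final prefactor matches the stated form rather than an asymptotically equivalent but numerically different expression.
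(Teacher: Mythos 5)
There is a genuine gap, and it is located exactly where you decide to pull $(\alpha^2-1)^{-k}$ out of the integral as the $\omega$-independent constant coming from $\omega=0$. Doing so leaves you with $\int_{-\infty}^{\infty} R^{-\mathfrak{n}_j}\,\mathrm{d}\omega \le (\lambda_{\max}-\lambda_{\min})^{\mathfrak{n}_j}\int_{-\infty}^{\infty}(\lambda_{\min}^2+\lambda_{\max}^2+2\omega^2)^{-\mathfrak{n}_j/2}\,\mathrm{d}\omega$, which \emph{diverges} when $\mathfrak{n}_j=1$. The proposition covers $\mathfrak{n}_j=1$ (e.g.\ $|\ell-i|=1$, $|m-j|=2$ in case i), or $\ell=i$, $|m-j|=2$ in case ii)), so your argument cannot produce the stated bound in those cases, and your Wallis/Beta evaluation $\int_{-\pi/2}^{\pi/2}\cos^{\mathfrak{n}_j-2}\phi\,\mathrm{d}\phi$ is likewise undefined there. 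The paper's proof avoids this precisely by \emph{not} freezing $\alpha^2-1$ at $\omega=0$: it uses the $\omega$-dependent bound $\alpha^2-1\ge 2(\omega^2+\lambda_{\max}\lambda_{\min})/(\lambda_{\max}-\lambda_{\min})^2$, so that $R^2/(R^2-1)^2\le \tfrac18(\lambda_{\max}-\lambda_{\min})^2/(\omega^2+\lambda_{\max}\lambda_{\min})$, and then converts $(\omega^2+\lambda_{\max}\lambda_{\min})^{-1}\le(\lambda_{\max}\lambda_{\min})^{-1}\bigl(1+2\omega^2/(\lambda_{\max}^2+\lambda_{\min}^2)\bigr)^{-1}$ into \emph{extra powers} of the decaying factor, raising the exponent to $\mathfrak{n}_2/2+2$ in case i) and $\mathfrak{n}_1/2+1$ in case ii). Those shifted exponents are the fingerprints you can read off the statement: they are why the final factors are $\sqrt{2\mathfrak{n}_2/(\mathfrak{n}_2+4)}$ and $\sqrt{2\mathfrak{n}_1/(\mathfrak{n}_1+2)}$, and they guarantee convergence down to $\mathfrak{n}_j=1$.

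Two further remarks. First, the paper sidesteps the Gamma-ratio bookkeeping you were worried about: it bounds $\int_0^\infty(1+\tau^2)^{-(\mathfrak{n}/2+p)}\,\mathrm{d}\tau$ via Bernoulli's inequality $(1+\tau^2)^{k}\ge 1+k\tau^2$ and an explicit arctangent, which directly yields $\tfrac{\pi}{2}\,\mathfrak{n}^{-1/2}\sqrt{\mathfrak{n}/(\mathfrak{n}/2+p)}$ with the exact constants of the statement. Second, for $\mathfrak{n}_j\ge 2$ your route can in fact be salvaged: the sharper constant $\alpha^2-1\ge 4\lambda_{\max}\lambda_{\min}/(\lambda_{\max}-\lambda_{\min})^2$ buys you an extra factor of $4$ per power, and one can check that $\tfrac{\sqrt{2}}{8\sqrt{\pi}}\sqrt{\mathfrak{n}+4}\;\Gamma((\mathfrak{n}-1)/2)/\Gamma(\mathfrak{n}/2)\le 1$ for all $\mathfrak{n}\ge 2$, so the stated inequality would follow there — but this verification is missing from your sketch, and in any case the $\mathfrak{n}_j=1$ case still requires the paper's redistribution of the $(\omega^2+\lambda_{\max}\lambda_{\min})$ factors.
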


\begin{proof} The proof is postponed to the Appendix. \end{proof}

We conclude this paragraph with a final qualitative bound {for ${\mathfrak n}_1, {\mathfrak n}_2$ large}, that emphasizes the asymptotic behavior.

\begin{proposition}\label{prop:final}
{
Let $\kappa=\lambda_{\max}/\lambda_{\min} = {\rm cond}(M)$.
\begin{enumerate}
\item[i)]
Assume $\ell, i, m, j$ are such that $\ell\ne i$, $m\ne j$ and
$ {\mathfrak n}_2=|\ell - i| + |m-j|-2>0$.
With the previous notation, it holds
$$
|(S^{-1})_{k,t}|
\le
\frac{\sqrt{\kappa^2+1}}{2\lambda_{\min}} 
\frac 1 {\sqrt{\mathfrak n}_2}.
$$
\item[ii)] Assume $\ell, i, m, j$ are such that $\ell=i$ or $m=j$ and
${\mathfrak n}_1=|\ell - i| + |m-j|-1>0$. With the previous notation, 
it holds
$$
|(S^{-1})_{k,t}|
\le
\frac{\kappa \sqrt{\kappa^2+1}}{2} 
\frac 1 {\sqrt{\mathfrak n}_1}.
$$
\end{enumerate}
}
\end{proposition}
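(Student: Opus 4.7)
The plan is to obtain both asymptotic bounds from the explicit estimates in Proposition \ref{prop:step2} by elementary manipulations of the powers of $\lambda_{\min}$, $\lambda_{\max}$, and $\kappa$, isolating the $1/\sqrt{{\mathfrak n}}$ decay and replacing the rest by a simple expression in $\kappa$ (and $\lambda_{\min}$).

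First I would peel off the ${\mathfrak n}$--dependent tail: in both (\ref{aux:1}) and (\ref{aux:2}) the last two factors are $\frac{1}{\sqrt{{\mathfrak n}}}\sqrt{\frac{2{\mathfrak n}}{{\mathfrak n}+c}}$ (with $c=4$ in case i) and $c=2$ in case ii)), which is bounded by $\sqrt{2}/\sqrt{{\mathfrak n}}$; together with the leading $1/(2\sqrt{2})$ this collapses to the clean prefactor $\frac{1}{2\sqrt{{\mathfrak n}}}$. Next I would dispose of the ``geometric'' quotient by exploiting that, since $M$ is SPD, $0<\lambda_{\min}\le\lambda_{\max}$, which gives both $\lambda_{\max}-\lambda_{\min}\le\lambda_{\max}$ and $\lambda_{\max}^{2}\le\lambda_{\max}^{2}+\lambda_{\min}^{2}$; raising these to the required powers and dividing yields the key inequality
\[
\frac{(\lambda_{\max}-\lambda_{\min})^{{\mathfrak n}+s}}{(\lambda_{\max}^{2}+\lambda_{\min}^{2})^{{\mathfrak n}/2}}\le\lambda_{\max}^{s},\qquad s\in\{1,2\},
\]
with $s=2$ in case i) and $s=1$ in case ii).

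The power $\lambda_{\max}^{s}$ is tailor-made to cancel against the rational prefactor. In case i), $\lambda_{\max}^{2}$ cancels the $\lambda_{\max}^{2}$ inside $(\lambda_{\max}\lambda_{\min})^{2}$, leaving $\sqrt{\lambda_{\max}^{2}+\lambda_{\min}^{2}}/\lambda_{\min}^{2}=\sqrt{\kappa^{2}+1}/\lambda_{\min}$ (using $\sqrt{\lambda_{\max}^{2}+\lambda_{\min}^{2}}=\lambda_{\min}\sqrt{\kappa^{2}+1}$); combining with $1/(2\sqrt{{\mathfrak n}_{2}})$ gives exactly the target $\tfrac{\sqrt{\kappa^{2}+1}}{2\lambda_{\min}}\tfrac{1}{\sqrt{{\mathfrak n}_{2}}}$. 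In case ii), the single $\lambda_{\max}$ cancels the $\lambda_{\max}$ in $\lambda_{\max}\lambda_{\min}^{2}$, and the analogous rewriting delivers $\tfrac{\sqrt{\kappa^{2}+1}}{2\lambda_{\min}\sqrt{{\mathfrak n}_{1}}}$. To reach the claimed $\tfrac{\kappa\sqrt{\kappa^{2}+1}}{2}\tfrac{1}{\sqrt{{\mathfrak n}_{1}}}$ I then invoke the final inequality $1/\lambda_{\min}\le\kappa$, which is equivalent to $\lambda_{\max}\ge 1$ and is built into the paper's scaling convention (the diagonal of $M$ is at most one, whence by the arithmetic-mean/eigenvalue inequality $\lambda_{\max}\ge\mathrm{tr}(M)/n$, which equals $1$ under the natural normalization in force throughout the examples).

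The only delicate point is this last conversion in case ii): the cleanest sequence of inequalities in fact produces the tighter bound $\tfrac{\sqrt{\kappa^{2}+1}}{2\lambda_{\min}\sqrt{{\mathfrak n}_{1}}}$, and to present the result in the unified $\kappa$--only form of Proposition \ref{prop:final} one must explicitly rely on the normalization $\lambda_{\max}\ge 1$; the rest of the argument is routine bookkeeping on the exponents, ensuring that the cancellations against the rational prefactor are exact.
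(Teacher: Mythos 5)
Your argument for part i) is correct and is essentially the paper's own: factor $\lambda_{\max}$ out of the ratio $(\lambda_{\max}-\lambda_{\min})^{{\mathfrak n}+2}/(\lambda_{\max}^2+\lambda_{\min}^2)^{{\mathfrak n}/2}$, bound the remaining $\kappa$-dependent quotient by one, and absorb $\sqrt{2{\mathfrak n}/({\mathfrak n}+4)}\le\sqrt 2$ into the leading $1/(2\sqrt2)$. For part ii) you have put your finger on a real wrinkle. Carried out honestly, your inequality with $s=1$ applied to \eqref{aux:2} yields $|(S^{-1})_{k,t}|\le \frac{\sqrt{\kappa^2+1}}{2\lambda_{\min}}\frac{1}{\sqrt{{\mathfrak n}_1}}$, and passing to the stated $\frac{\kappa\sqrt{\kappa^2+1}}{2}\frac{1}{\sqrt{{\mathfrak n}_1}}$ does require $1/\lambda_{\min}\le\kappa$, i.e.\ $\lambda_{\max}\ge1$. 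The paper's proof avoids this step only because it applies the eigenvalue inequality with numerator exponent ${\mathfrak n}+2$, whereas \eqref{aux:2} carries ${\mathfrak n}_1+1$; with the exponent actually present in Proposition \ref{prop:step2}(ii) one lands exactly where you do. So your bound is in fact the sharper and more defensible one, and your remark that the published constant needs the normalization is a legitimate catch rather than a defect of your proof. One small precision: ``diagonal entries not greater than one'' gives $\mathrm{tr}(M)/n\le1$ and hence does \emph{not} imply $\lambda_{\max}\ge1$; what you need is the Jacobi scaling the paper actually performs ($\mathrm{diag}(M)=I$, so $\mathrm{tr}(M)=n$ and $\lambda_{\max}\ge\mathrm{tr}(M)/n=1$). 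With that hypothesis stated explicitly, your derivation of ii) is complete; without it, the clean statement to record is the $\lambda_{\min}$-dependent bound you obtained.
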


\begin{proof}

i) The constant involving the extreme eigenvalues of $M$ satisfies
\begin{equation}\label{aux:3}
\frac{(\lambda_{\max} - \lambda_{\min})^{{\mathfrak n}+2}}{(\lambda_{\max}^2 +\lambda_{\min}^2)^{{\mathfrak n}/2}} 
\frac {\sqrt{\lambda_{\max}^2+\lambda_{\min}^2}} {(\lambda_{\max}\lambda_{\min})^2} 
\le \frac{\sqrt{\kappa^2 +1}}{\lambda_{\min}} ,
\end{equation}
where $\kappa=\lambda_{\max}/\lambda_{\min}\ge 1$.
Indeed, 
\begin{eqnarray*}
\frac{(\lambda_{\max} - \lambda_{\min})^{{\mathfrak n}+2}}{(\lambda_{\max}^2 +\lambda_{\min}^2)^{{\mathfrak n}/2}} 
\frac {\sqrt{\lambda_{\max}^2+\lambda_{\min}^2}} {(\lambda_{\max}\lambda_{\min})^2} =
\frac{\lambda_{\max}^{{\mathfrak n}+2}}{\lambda_{\max}^{\mathfrak n}}
\frac{ (1-1/\kappa)^{{\mathfrak n}+2}}{(1+1/\kappa^2)^{{\mathfrak n}/2}} 
\frac 1 {\lambda_{\min}^2\lambda_{\max}} \sqrt{ 1+\frac{1}{\kappa^2}},
\end{eqnarray*}
with $\frac{ (1-1/\kappa)^{{\mathfrak n}+2}}{(1+1/\kappa^2)^{{\mathfrak n}/2}} \le 1$.
{
Inserting \eqref{aux:3}  into \eqref{aux:1} and
noticing that $\frac{{\mathfrak n}_2}{{\mathfrak n}_2/2+1} \le 2$ yield the result. 

The proof of ii) goes along the same lines of i) after observing that it holds 
$$
 \frac{(\lambda_{\max} - \lambda_{\min})^{{\mathfrak n}+2}}{(\lambda_{\max}^2 +\lambda_{\min}^2)^{{\mathfrak n}/2}} 
\frac {\sqrt{\lambda_{\max}^2+\lambda_{\min}^2}} {\lambda_{\max}\lambda_{\min}^2} 
\le \frac{\lambda_{\max}\sqrt{\kappa^2 +1}}{\lambda_{\min}}=\kappa\sqrt{\kappa^2 +1}. 
$$
}
\end{proof}

We remark that a result in a similar direction was reported in \cite[Theorem 4.5]{MeurantJul.1992} for
the Laplacian matrix,
although in there, an explicit dependence on the problem dimension arises, together with a 
more involved dependence on the discretization grid.

In terms of the indices of $S^{-1}$, 
our bound shows that
\begin{eqnarray*}
|(S^{-1})_{k,t}| &=& |(S^{-1})_{\ell+n(m-1),i+n(j-1)} |  \\
&\le &
\gamma_0 
\frac 1 {\sqrt{|\ell -i|+|m-j|-2}}  \\
&=&
\gamma_0 
\frac 1 {\sqrt{|k-t -n( \lfloor \frac {(k-1)} n\rfloor - \lfloor \frac {(t-1)} n\rfloor )|+
|\lfloor \frac {(k-1)} n\rfloor - \lfloor \frac {(t-1)} n\rfloor |-2}} .
\end{eqnarray*}

For instance,
for all the elements on the secondary diagonal, satisfying $k+t=n^2$, we
obtain
$$
|(S^{-1})_{k,t}| \le 
\gamma_0 
\frac 1 {( |n^2-2t - n (\lfloor \frac{n^2-t-1}{n}\rfloor - 
\lfloor \frac{t-1}{n}\rfloor)|+ | \lfloor \frac{n^2-t-1}{n}\rfloor - 
\lfloor \frac{t-1}{n}\rfloor| -2)^{\frac 1 2 }}.
$$

In the qualitative bound of Proposition \ref{prop:final}
the asymptotic term does not depend on the actual
entries of $S^{-1}$, but only on the position in the underlying grid. 
This property reflects similar considerations obtained
for point-wise estimates in the context of the discrete Laplacian.
Indeed, for the discrete Green function $G_h$ on the discrete $N$-dimensional grid $R_h$,
it was shown in \cite{Bramble.Thomee.69} that there exist constants $h_0$ and $C$ such
that for $h\le h_0$, $x,y\in R_h$, 
\begin{eqnarray}\label{eqn:bramble}
G_h(x,y) \le 
\left \{ \begin{array}{ll} 
C \log \frac{C}{|x-y|+h} & {\rm if } N=2 \\
 \frac{C}{(|x-y|+h)^{N-2}} & {\rm if } N\ge 3 .
\end{array} \right .
\end{eqnarray}
Our computable bound in Proposition~\ref{prop:final} shows that the entries depend on the
inverse square root of the distance, whereas in (\ref{eqn:bramble}) an asymptotic
(slower) logarithmic dependence on the distance is reported for the two-dimensional case.

We also notice that other bounds are available that use different distance concepts; for instance,
in \cite[Theorem 3.4]{Benzi2007} the decay pattern of certain matrix functions is
described by means of graph theory, in terms of distance\footnote{Defined as the
length of the shortest directed path connecting the two nodes.} between nodes of a digraph, 
where the nodes are the entry indices in the matrix inverse.

%

\section{On the decay of the Cholesky factor}\label{sec:Cholfactor}
When preconditioning a large algebraic linear system, a-priori information
on the decay properties of the inverse of the Cholesky factor of $S$ may
be important to guide the computation of incomplete factorizations. Indeed,
assuming that $S=LL^{\top}$ is the Cholesky factorization of $S$,
if the entries of  $S^{-1}$ decay away from the main diagonal with a certain
pattern, we expect that also the factor
inverse $L^{-\top}$ will show a similar pattern. This fact was proved in
\cite{Benzi.Tuma.00} for banded matrices $S$ by using the decay rate
in (\ref{eqn:demko}).
With the same technical tools, we
generalize this result to our decay pattern, under the assumption that
 $S$ has a bandwidth $b$.

\begin{proposition}
Assume $S$ is $b$-banded, with diagonal elements not greater than one, and 
let $S=LL^\top$. With the previous notation, for ${\mathfrak n}={\mathfrak n}_i >0$, 
$i=1,2$,
$$
|(L^{-\top})_{k,t}| \le
\gamma_0  \frac b {\sqrt{\mathfrak n(k,t)}}  .
$$
\end{proposition}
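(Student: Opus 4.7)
The plan is to mimic the strategy of Benzi and Tum\`a \cite{Benzi.Tuma.00}, which transfers a decay bound for $S^{-1}$ to the inverse Cholesky factor via the identity $L^{-\top}=S^{-1}L$, obtained by right-multiplying $L^{-\top}L^{-1}=S^{-1}$ by $L$. Since $S$ is $b$-banded and SPD, $L$ is lower triangular and $b$-banded as well, so $L^{-\top}$ is upper triangular and the statement is trivial for $k>t$. For $k\le t$, I would expand the $(k,t)$-entry of $S^{-1}L$: because $L_{s,t}=0$ outside $t\le s\le t+b$, only $b+1$ terms survive, giving
$$
(L^{-\top})_{k,t} \;=\; \sum_{s=t}^{\min(t+b,\,n^2)} (S^{-1})_{k,s}\, L_{s,t}.
$$

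Next, I would bound the $L$-factors uniformly. From the Cholesky relation $S_{s,s}=\sum_{j\le s}L_{s,j}^{\,2}$ and the normalization $S_{s,s}\le 1$, it follows that $|L_{s,t}|\le 1$ for every $s\ge t$. Applying Proposition~\ref{prop:final} to each $|(S^{-1})_{k,s}|$ then produces a summand of order $\gamma_0/\sqrt{{\mathfrak n}(k,s)}$, provided ${\mathfrak n}(k,s)>0$.

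The crucial observation is that the mesh-distance is essentially stable under an $O(b)$ shift of the column index: if $k$ has grid coordinates $(\ell,m)$ and $s$ has coordinates $(i_s,j_s)$, then $|i_s-i_t|+|j_s-j_t|\le s-t+1\le b+1$, so ${\mathfrak n}(k,s)\ge {\mathfrak n}(k,t)-(b+1)$ for every $s\in[t,t+b]$. Hence every summand is bounded by $\gamma_0/\sqrt{{\mathfrak n}(k,t)-(b+1)}$, and summing the $b+1$ contributions yields
$$
|(L^{-\top})_{k,t}| \;\le\; (b+1)\,\frac{\gamma_0}{\sqrt{{\mathfrak n}(k,t)-(b+1)}} \;\le\; \gamma_0\,\frac{b}{\sqrt{{\mathfrak n}(k,t)}},
$$
once ${\mathfrak n}(k,t)$ is large enough and the implicit constant is absorbed into $\gamma_0$.

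The main obstacle I anticipate is the bookkeeping around ${\mathfrak n}(k,s)$ as $s$ sweeps the narrow band $[t,t+b]$: Proposition~\ref{prop:final} is stated by cases according to whether $\ell=i$ or $m=j$, and either type of equality may appear or disappear as $s$ varies, switching between the ${\mathfrak n}_1$ and ${\mathfrak n}_2$ regimes. One must therefore verify that both subcases behave compatibly under the $O(b)$ shift so that a single estimate of the stated form is obtained. A secondary nuisance is tracking the multiplicative constant so that exactly the factor $b$ claimed in the proposition emerges, which may require absorbing $\sqrt{2}$-type losses (coming from $\sqrt{{\mathfrak n}(k,t)-(b+1)}\ge \sqrt{{\mathfrak n}(k,t)/2}$) into the already qualitative constant $\gamma_0$.
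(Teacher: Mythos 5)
Your proposal follows essentially the same route as the paper's own proof: expand $(L^{-\top})_{k,t}=\sum_{r}(S^{-1})_{k,r}L_{r,t}$ over the band of $L$, bound $|L_{r,t}|\le 1$ via the normalized diagonal of $S$, and apply the decay estimate for $S^{-1}$ to each of the $O(b)$ surviving terms. The only divergence is the final bookkeeping: the paper simply asserts the monotonicity ${\mathfrak n}(k,t)\le{\mathfrak n}(k,r)$ for $t\le r\le t+b-1$, which produces the constant $b$ exactly, whereas your perturbation bound ${\mathfrak n}(k,s)\ge{\mathfrak n}(k,t)-(b+1)$ forces you to absorb constants for ${\mathfrak n}$ large — an acceptable loss for a qualitative estimate of this kind.
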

\begin{proof}
We have
$$
|(L^{-\top})_{k,t}| \le \sum_{r=t}^{t+b-1} | (S^{-1})_{k,r}| \, |L_{r,t}|
\le \gamma_0 \sum_{r=t}^{t+b-1} \frac 1 {\sqrt{\mathfrak n(k,r)}} \leq \gamma_0  
\frac b {\sqrt{\mathfrak n(k,t)}}  ,
$$
where we used the inequality 
${\mathfrak n(k,t)}\leq {\mathfrak n(k,r)}$ for $k\leq t\leq r \leq t+b-1$.
\end{proof}

We notice that a slightly sharper upper bound could be obtained by first
using the bound of Proposition~\ref{prop:step1}, however the final asymptotic
dependence with respect to $\mathfrak n$ would still be the same.

The estimate for the entries of $L^{-1}$ could be used in the design
of linear system preconditioners by means of approximate inverses 
\cite{Benzi.Tuma.00},\cite{Benzi.prec.02}.
Indeed, not only a decay pattern occurs away from the
diagonal, but many tiny values appear within the bandwidth. Therefore,
a threshold-based dropping strategy could be used in conjunction with a band-based
procedure, to a-priori increase the sparsity of the explicit
approximate inverse. Similar considerations can guide the 
design of quasi-orthogonal polynomial bases, as those developed in,
e.g., \cite{Canuto.Simoncini.Verani.13tr},\cite{Challacombe.99}.

\section{More general settings}\label{sec:gen}
The results of the previous sections can be generalized in a number of
ways. For instance, we can allow the symmetric and positive matrix $M$ in (\ref{eqn:main})
to be generally $b$-banded, so that Proposition~\ref{prop:Freund}
can be used in its full generality. The resulting estimate is reported
below. Its proof is omitted as it is analogous to that of Proposition \ref{prop:step1}.

\begin{proposition}\label{prop:step1band}
Let $M$ be a real symmetric and positive definite matrix of size $n$ and bandwidth $b$.
 For $k, t\in \{1, \ldots, n^2\}$,
let
$$
j=\lfloor (t-1)/n\rfloor+1,\quad i=t-n \lfloor (t-1)/n\rfloor
$$
 and $\ell, m$ as in (\ref{eqn:indices}).
With the notation above, the following holds.

i) If $i\ne \ell$ and $j\ne m$, then
$$
|(S^{-1})_{k,t}|  
\le
\frac{1}{2\pi}
\frac{64}{|\lambda_{\max}-\lambda_{\min}|^2} \int_{-\infty}^{\infty} 
 \left ( \frac{R^2}{(R^2-1)^2}\right )^2
\left ( \frac{1}{R}\right )^{|i-\ell|/b+|j-m|/b-2} {\rm d}\omega ;
$$
ii) If either $i= \ell$  or $j= m$, then
$$
|(S^{-1})_{k,t}|  
\le
\frac{1}{2\pi}
\frac{8}{|\lambda_{\max}-\lambda_{\min}|} \int_{-\infty}^{\infty} 
\frac 1 {\sqrt{\lambda_{\min}^2 + \omega^2}}
 \frac{R^2}{(R^2-1)^2}
\left ( \frac{1}{R}\right )^{|i-\ell|/b+|j-m|/b-1} {\rm d}\omega ;
$$
iii) If both $i= \ell$  and $j= m$, then
$$
|(S^{-1})_{k,t}|  
\le
\frac{1}{2\pi}
\int_{-\infty}^{\infty} \frac 1 {\lambda_{\min}^2 + \omega^2} {\rm d}\omega = \frac 1 {2\lambda_{\min}}.
$$
\end{proposition}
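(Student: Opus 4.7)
The plan is to mirror the proof of Proposition~\ref{prop:step1} essentially verbatim, exploiting the fact that both ingredients carry over to the $b$-banded setting with only cosmetic changes. Specifically, I would start from the same Lyapunov identity
$$
{\cal X}_t = \frac{1}{2\pi} \int_{-\infty}^{\infty} (\imath\omega I + M)^{-1} e_i e_j^\top (\imath\omega I + M)^{-*} \, d\omega,
$$
which holds for any SPD $M$ regardless of bandwidth, and then pass to the entry $(S^{-1})_{k,t} = e_\ell^\top {\cal X}_t e_m$ via the triangle inequality, obtaining the analogue of (\ref{eqn:Xlm}).

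The key observation is that Proposition~\ref{prop:Freund}, as quoted, already contains the bandwidth $b$ in the exponent $(1/R)^{|\ell-i|/b}$; the multiplicative constant $\tfrac{2R}{|\lambda_1-\lambda_2|}B(a)$ is independent of $b$. Hence the same estimate $B(a) \le 1/\beta_R^2$ that was derived in the proof of Proposition~\ref{prop:step1} yields
$$
|e_\ell^\top (\imath\omega I + M)^{-1} e_i| \le \frac{2R}{|\lambda_1-\lambda_2|} \frac{4R^2}{(R^2-1)^2} \left(\frac{1}{R}\right)^{|\ell-i|/b}, \qquad \ell \ne i,
$$
which differs from the tridiagonal case only by the factor $1/b$ in the exponent. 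Substituting the corresponding bound for each of the two integrand factors $|e_\ell^\top z_i(\omega)|$ and $|z_j(\omega)^* e_m|$ yields case~i).

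For cases~ii) and iii), when $\ell=i$ or $m=j$, the bound (\ref{eqn:equal}) still applies in the $b$-banded case: it follows from $\|(\imath\omega I + M)^{-1}\|_2 = 1/\min_k |\imath\omega + \lambda_k| \le 1/\sqrt{\lambda_{\min}^2 + \omega^2}$, which is a spectral property of $M$ and has nothing to do with its bandwidth. Substituting this bound in place of the Freund estimate for the ``trivial'' index, and keeping the Freund estimate for the non-trivial one, produces the stated inequalities.

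I do not anticipate a genuine obstacle: the bandwidth enters only as a scaling of the exponent. The one caveat worth flagging (as the authors themselves note around Figure~\ref{fig:Freund}) is that Proposition~\ref{prop:Freund} tends to be less tight for small $|\omega|$ and moderate $b$, so the resulting integral bound will be qualitatively correct but quantitatively looser than in the tridiagonal case; this affects sharpness but not the statement being proved.
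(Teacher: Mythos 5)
Your proposal is correct and is exactly the argument the paper has in mind: the authors omit the proof of Proposition~\ref{prop:step1band} precisely because it repeats the proof of Proposition~\ref{prop:step1} with the bandwidth $b$ already built into the exponent of Freund's bound, and your handling of the degenerate cases via the spectral-norm estimate (\ref{eqn:equal}), which is indeed bandwidth-independent, matches the paper's treatment. No gaps.
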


\begin{figure}[hbt]
\centering
\includegraphics[width=2.5in,height=2.5in]{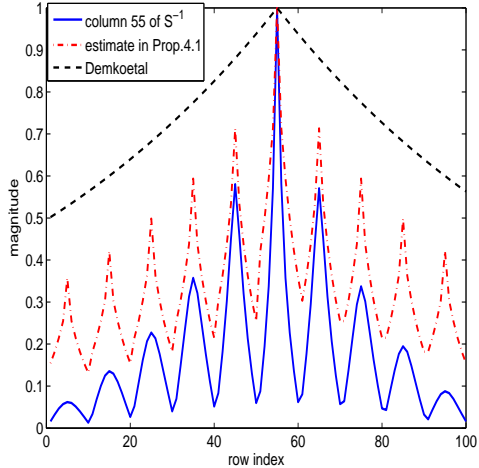}
\caption{Example \ref{ex:penta}.
 Components of column $t=55$ of the inverse of the Laplace stiffness matrix
of size
$100\times 100$ with a 9-point stencil discretization, 
and its estimate from Proposition \ref{prop:step1band}.
(all curves are scaled by the values of the corresponding diagonal.)
\label{fig:penta}}
\end{figure}

\vskip 0.1in
\begin{example}\label{ex:penta}
{\rm
We consider the $100\times 100$ matrix $S$ stemming from
the discretization of the two-dimensional Laplace operator
by means of a more accurate discretization (9-point stencil) of the
one-dimensional derivative than in Example \ref{ex:1}. This gives
$$
M={\rm pentadiag}\left (\frac 1{12}, -\frac 4 3, \underline{\frac{15}{6}}, -\frac 4 3, \frac 1{12}\right ),
$$ 
which has bandwidth $b=2$. The plot in Figure \ref{fig:penta} (for column $t=55$ of
the inverse $S^{-1}$) shows that
the estimate of Proposition~\ref{prop:step1band} is able to capture the 
oscillating behavior, but somewhat fails to follow the asymptotic pattern of the inverse, predicting
a slower decay. (We recall here that all values were scaled to be one at the $t$-th
component.)  As already mentioned, this seems
to be due to the weakness of the exponential bound in Proposition \ref{prop:Freund}
for a larger bandwidth. We explicitly observe that also the monotonic classical
bound (\ref{eqn:demko}) does not seem to closely match the actual asymptotic decay pattern; we
should keep in mind that in this case, $S$ has bandwidth $b\cdot n = 20$, which seems to
also significantly deteriorate the classical estimate.
}
\end{example}

Another generalization is obtained by assuming that
$S=S_g$ can be written as in (\ref{eqn:M1M2}), with $M_1$, $M_2$ 
symmetric and positive definite square matrices, of size $n_1$ and $n_2$, respectively, so that
$S_g$ is of size $n_1n_2\times n_1n_2$. Following the derivation in 
Section~\ref{sec:decay}, the elements of
each column $t$ of the inverse can be derived as the elements
of the solution matrix ${\cal X}$ to the {\it Sylvester} equation
\begin{eqnarray}\label{eqn:sylv}
M_1 {\cal X} + {\cal X} M_2 = {\cal E} .
\end{eqnarray}
The following result generalizes one of the cases of Proposition \ref{prop:step1}. The other case
can be derived analogously.

\begin{proposition}
Assume $M_1, M_2$ are symmetric positive definite and tridiagonal matrices.
Let $\delta_{1,2}=
(\lambda_{\max}(M_1)-\lambda_{\min}(M_1))
(\lambda_{\max}(M_2)-\lambda_{\min}(M_2))$.
For $k, t\in \{1, \ldots, n_1n_2\}$ let
$j=\lfloor (t-1)/n_1\rfloor+1,\quad i=t-n_1 \lfloor (t-1)/n_1\rfloor$,
$\ell=\lfloor (k-1)/n_1\rfloor+1,\quad m=t-n_1 \lfloor (k-1)/n_1\rfloor$,
be such that
$\ell \ne i$ and $m\ne j$. Then it holds that
$$
|(S_g^{-1})_{k,t}|  
\le
\frac{1}{2\pi}
\frac{64}{\delta_{1,2}} \int_{-\infty}^{\infty} 
  \frac{R_1^2}{(R_1^2-1)^2} 
  \frac{R_2^2}{(R_2^2-1)^2}
\left ( \frac{1}{R_1}\right )^{|i-\ell|-1}
\left ( \frac{1}{R_2}\right )^{|j-m|-1} {\rm d}\omega ,
$$
with $R_1$ and $R_2$ are defined as in Proposition \ref{prop:Freund} for
each of the spectra of $M_1$ and $M_2$, respectively.
\end{proposition}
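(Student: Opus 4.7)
The plan is to mirror the proof of Proposition \ref{prop:step1} step by step, adapting it from the Lyapunov to the Sylvester setting. The first move would be to write the solution of \eqref{eqn:sylv} in the integral form
$$
\mathcal{X} \,=\, \frac{1}{2\pi}\int_{-\infty}^{\infty} (\imath\omega I + M_1)^{-1}\, \mathcal{E}\, (\imath\omega I + M_2)^{-*}\, \mathrm{d}\omega,
$$
which is a natural extension of the Lyapunov representation used at the beginning of Section \ref{sec:decay}. One verifies it by applying $M_1$ on the left and $M_2$ on the right, using $M_k(\imath\omega I + M_k)^{-1} = I - \imath\omega(\imath\omega I + M_k)^{-1}$ and the analogous right-sided identity so that the $\imath\omega$-weighted cross terms cancel, and then invoking the standard contour integral $\frac{1}{2\pi}\int_{-\infty}^{\infty}(\imath\omega I + A)^{-1}\,\mathrm{d}\omega = \tfrac{1}{2}I$, valid for any Hermitian positive definite $A$.

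Once the representation is in place, inserting $\mathcal{E} = e_i e_j^\top$ factorises the integrand and the target entry reads
$$
(S_g^{-1})_{k,t} \,=\, e_\ell^\top \mathcal{X}\, e_m \,=\, \frac{1}{2\pi}\int_{-\infty}^{\infty} \bigl[e_\ell^\top(\imath\omega I + M_1)^{-1}e_i\bigr]\bigl[e_j^\top(\imath\omega I + M_2)^{-*}e_m\bigr]\, \mathrm{d}\omega.
$$
Pulling absolute values under the integral, I would then estimate the two scalar factors independently by Proposition \ref{prop:Freund} with $b = 1$: the first one using the spectral data of $M_1$ (producing $R_1$), the second one using the spectral data of $M_2$ (producing $R_2$, since $M_2^* = M_2$). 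Applying the simplification $B(a) \le 1/\beta_R^2 = 4R^2/(R^2-1)^2$ already carried out in the proof of Proposition \ref{prop:step1} then gives, for each $k \in \{1,2\}$,
$$
\bigl|e_\ell^\top (\imath\omega I + M_k)^{-1} e_\bullet\bigr| \,\le\, \frac{8}{\lambda_{\max}(M_k)-\lambda_{\min}(M_k)}\, \frac{R_k^2}{(R_k^2-1)^2}\, \Bigl(\frac{1}{R_k}\Bigr)^{d_k-1},
$$
with $d_1 = |\ell-i|$ and $d_2 = |m-j|$, both strictly positive under the hypothesis $\ell\ne i$, $m\ne j$.

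Multiplying the two bounds extracts the constant $64/\delta_{1,2}$ and leaves precisely the integrand claimed in the statement; no further manipulations are needed. The main (and essentially only) obstacle I foresee is the first step, namely justifying the Sylvester integral representation when $M_1 \ne M_2$, because the argument given in \cite{Horn.Johnson.91} for Lyapunov must be reworked with a distinct shift on each side and with $(\imath\omega I + M_2)^{-*}$ replacing $(\imath\omega I + M_1)^{-*}$. Once that is settled, the two factors are controlled by completely independent single-matrix Freund bounds and the tridiagonal argument transfers verbatim, with the understanding that the mixed cases ($\ell = i$ or $m = j$) would be handled by replacing the corresponding Freund estimate by $|\lambda_{\min}(M_k) + \imath\omega|^{-1}$, exactly as in parts (ii)--(iii) of Proposition \ref{prop:step1}.
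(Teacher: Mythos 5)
Your proposal is correct and follows essentially the same route as the paper: write the Sylvester solution in the closed integral form $\frac{1}{2\pi}\int(\imath\omega I+M_1)^{-1}e_ie_j^\top(\imath\omega I+M_2)^{-*}\,\mathrm{d}\omega$, then apply Freund's bound (with the simplification $B(a)\le 4R^2/(R^2-1)^2$) separately to each scalar factor using the spectra of $M_1$ and $M_2$, and multiply to get the constant $64/\delta_{1,2}$. The only difference is that you spell out a verification of the Sylvester integral representation, which the paper simply cites from Horn--Johnson.
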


\begin{proof}
We can write the solution to the Sylvester equation in closed form
as (see, e.g., \cite{Horn.Johnson.91})
\begin{eqnarray*}
{\cal X}_t & = &
\frac{1}{2\pi}
\int_{-\infty}^{\infty} (\imath \omega I + M_1)^{-1} e_i e_j^\top
(\imath \omega I + M_2)^{-*} {\rm d}\omega .
\end{eqnarray*}
To evaluate $|({\cal X}_t)_{\ell,m}|$
we can then apply again Proposition~\ref{prop:Freund} to each of the
inner term, namely to
$|e_\ell^\top (\imath \omega I + M_1)^{-*} e_i|$,
$|e_j^\top (\imath \omega I + M_2)^{-*} e_m|$. The final result thus 
follows as in Proposition \ref{prop:step1}.
\end{proof}

Finally, we observe that
the two generalizations above could be combined, giving estimates for
the entries of the inverse when $M_1$ and $M_2$ have different bandwidth.

\section{Conclusions}
We have characterized the decay pattern of the inverse of
banded matrices that can be written
as the sum of two Kronecker products, in which each of the matrices is symmetric
positive definite and banded. Our results explain the non-monotonic
(oscillating) pattern commonly observed in these inverses, while providing
upper bounds that can be sharp, especially for low bandwidth.
We also showed that corresponding results can be obtained for
more general Kronecker-type matrices with different banded matrices
$M_1$ and $M_2$.

\section*{Acknowledgements}
The authors would like to thank Leonid Knizhnerman for carefully
reading an earlier draft of this manuscript, and Michele Benzi for
helpful discussions.
{The first and third authors were partially supported by 
the Italian research fund INdAM-GNCS 2013
``Aspetti emergenti nello studio di strategie adattative
per problemi differenziali''.}
The second author was partially supported by the University of Bologna through the
FARB Project ``Metodi matematici per l'esplorazione ambientale sostenibile''.

\section*{Appendix}
In this appendix we prove Proposition \ref{prop:step2}.

\begin{proof}
i) We set ${\cal X}_{\ell m} := e_\ell^\top {\cal X}_t e_m = (S^{-1})_{k,t}$.
From the result of Proposition \ref{prop:step1}, we need to bound the
integrand in a way that the integral still converges.

We observe that
\begin{eqnarray*}
\frac 1 R \le \frac 1 \alpha &= &
\frac{\lambda_{\max} - \lambda_{\min}}{|\lambda_{1}| +|\lambda_{2}|} \\
&\le&
\frac{\lambda_{\max} - \lambda_{\min}}{(\lambda_{\max}^2 +\lambda_{\min}^2+2\omega^2)^{1/2}} =
\frac{\lambda_{\max} - \lambda_{\min}}{(\lambda_{\max}^2 +\lambda_{\min}^2)^{1/2}} 
\frac{1}{\left(1+ \frac{2\omega^2}{\lambda_{\max}^2 +\lambda_{\min}^2}\right)^{1/2}}  .
\end{eqnarray*}
Moreover, after some algebraic calculation, it follows that
$R-1/R = 2\sqrt{\alpha^2-1}$, and 
since $\alpha^2-1\ge (2\omega^2+2\lambda_{\max}\lambda_{\min})/(\lambda_{\max}-\lambda_{\min})^2$,
\begin{eqnarray}\label{eqn:R}
\frac{R^2}{(R^2-1)^2} = \frac 1 {4(\alpha^2-1)} \le 
\frac 1 8 
\frac{|\lambda_{\max} - \lambda_{\min}|^2}{\omega^2+\lambda_{\max}\lambda_{\min}}  .
\end{eqnarray}
Therefore, letting ${{\mathfrak n}_2}={|i-\ell|+|j-m|-2}$,
\begin{eqnarray*}
|{\cal X}_{\ell m}| & \le &
\frac{1}{2\pi}
\frac{64}{|{
\lambda_{\max}-\lambda_{\min}}|^2} \int_{-\infty}^{\infty} 
 \left ( \frac{R^2}{(R^2-1)^2}\right )^2
\left ( \frac{1}{R}\right )^{{\mathfrak n}_2} 
{\rm d}\omega \\
& \le & 
\frac{1}{2\pi}
\frac{64 |\lambda_{\max} - \lambda_{\min}|^4 }{{64}|\lambda_{\max}-\lambda_{\min}|^2} 
\left(\frac{\lambda_{\max} - \lambda_{\min}}{(\lambda_{\max}^2 +\lambda_{\min}^2)^{\frac 1 2}} 
\right)^{{\mathfrak n}_2} 
\int_{-\infty}^{\infty} 
\frac{1}{(\omega^2+\lambda_{\max}\lambda_{\min})^2}  
\left (\frac{1}{1+ \frac{2\omega^2}{\lambda_{\max}^2 +\lambda_{\min}^2}} 
\right)^{\frac{{\mathfrak n}_2}{2}}
{\rm d}\omega \\
& \le & 
\frac{1}{2\pi} 
\frac{(\lambda_{\max} - \lambda_{\min})^{{{\mathfrak n}_2}+2}}{(\lambda_{\max}^2 +\lambda_{\min}^2)^{\frac{{\mathfrak n}_2}{2}}} 
\int_{-\infty}^{\infty} 
\frac{1}{(\omega^2+\lambda_{\max}\lambda_{\min})^2}  
\left (\frac{1}{1+ \frac{2\omega^2}{\lambda_{\max}^2 +\lambda_{\min}^2}} 
\right)^{\frac{{\mathfrak n}_2}{2}}
{\rm d}\omega .
\end{eqnarray*}
Since
$$
\frac{1}{\omega^2+\lambda_{\max}\lambda_{\min}}   =
\frac{1}{\lambda_{\max}\lambda_{\min}} 
\frac 1 {\frac{2\omega^2}{2\lambda_{\min}\lambda{\max}} +1}
\le 
\frac{1}{\lambda_{\max}\lambda_{\min}} 
\frac 1 {\frac{2\omega^2}{\lambda_{\min}^2+\lambda_{\max}^2} +1} ,
$$
we bound the entry further as
$$
|{\cal X}_{\ell m}|  \le 
\frac{1}{2\pi}
\frac{(\lambda_{\max} - \lambda_{\min})^{{{\mathfrak n}_2}+2}}{(\lambda_{\max}^2 +\lambda_{\min}^2)^{{{\mathfrak n}_2}/2}} 
\frac 1 {(\lambda_{\max}\lambda_{\min})^2}
\int_{-\infty}^{\infty} 
\left (\frac{1}{1+ \frac{2\omega^2}{\lambda_{\max}^2 +\lambda_{\min}^2}} \right)^{{{\mathfrak n}_2}/2+2} 
{\rm d}\omega .
$$
We next estimate the integral. 
Let $\tau=\sqrt{ \frac{2}{\lambda_{\max}^2+\lambda_{\min}^2}} \omega$, so that
$d\tau = \sqrt{ \frac{2}{\lambda_{\max}^2+\lambda_{\min}^2}} d\omega$.
Then
$$
\int_{-\infty}^{\infty} 
\left (\frac{1}{1+ \frac{2\omega^2}{\lambda_{\max}^2 +\lambda_{\min}^2}} \right)^{{{\mathfrak n}_2}/2+2} d\omega
=
\sqrt{2} \sqrt{\lambda_{\max}^2+\lambda_{\min}^2}  
\int_{0}^{\infty} \left(\frac{1}{1+\tau^2} \right)^{{{\mathfrak n}_2}/2+2} d\tau .
$$
The integral above  is half the Beta function 
${\cal B}(\frac 1 2, \frac{{{\mathfrak n}_2}+3}{2})$ (\cite[formula 8.38.2]{Gradshteyn1980}).
It is known that for ${{\mathfrak n}_2}$ large,  
${\cal B}(\frac 1 2, \frac{{{\mathfrak n}_2}+3}{2}) \approx \Gamma(1/2) ( ({{\mathfrak n}_2}+3)/2)^{-1/2}$.
However, we can provide an explicit bound for the integral. We recall that
$(1+\tau)^k \ge 1+ k \tau$, for all $\tau>-1$. Then, using the
change of variable $\tau=s/\sqrt{{{\mathfrak n}_2}}$, we can write
\begin{eqnarray*}
\int_{0}^{\infty} \frac{1}{(1+\tau^2)^{{{\mathfrak n}_2}/2+2}} d\tau & \le &
\int_{0}^{\infty} \frac{1}{1+({{\mathfrak n}_2}/2+2)\tau^2} d\tau \\
&=&
\frac 1 {\sqrt{{{\mathfrak n}_2}}}\int_{0}^{\infty} \frac{1}{1+({{\mathfrak n}_2}/2+2)s^2/{{\mathfrak n}_2}} ds\\
&=&
\frac 1 {\sqrt{{{\mathfrak n}_2}}} \sqrt{\frac{{{\mathfrak n}_2}}{{{\mathfrak n}_2}/2+{ 2}}} 
{\rm atan}\left ( s \sqrt{\frac{{{\mathfrak n}_2}/2+2}{{{\mathfrak n}_2}}} \right ) {\mid}_0^{\infty}  =
\frac{\pi}{2} \frac 1 {\sqrt{{{\mathfrak n}_2}}} 
\sqrt{\frac{{{\mathfrak n}_2}}{{{\mathfrak n}_2}/2+{ 2}}} 
\end{eqnarray*}
{ thus yielding 
$$
|{\cal X}_{\ell m}|
\le
\frac{1}{2\sqrt{2}}
\frac{(\lambda_{\max} - \lambda_{\min})^{{\mathfrak n}_2+2}}{(\lambda_{\max}^2 +\lambda_{\min}^2)^{{\mathfrak n}_2/2}} 
\frac {\sqrt{\lambda_{\max}^2+\lambda_{\min}^2}} {(\lambda_{\max}\lambda_{\min})^2}
\frac 1 {\sqrt{{\mathfrak n}_2}} \sqrt{\frac{{\mathfrak n}_2}{{\mathfrak n}_2/2+2}}.
$$
}

ii) If $i=\ell$ or $j=m$, then Proposition \ref{prop:step1}(ii) applies.
We set  $\mathfrak{n}_1=|i-\ell|+|j-m|-1$.
Using { again} (\ref{eqn:R}) and the bound on $1/R$, the following bound holds
\begin{eqnarray*}
&&\int_{-\infty}^{\infty} 
\frac 1 {\sqrt{\lambda_{\min}^2 + \omega^2}}
 \frac{R^2}{(R^2-1)^2}
\left ( \frac{1}{R}\right )^{\mathfrak{n}_1} {\rm d}\omega 
\\
&& \le
\frac 1 8 
\frac{(\lambda_{\max}-\lambda_{\min})^{\mathfrak{n}_1}}{(\lambda_{\max}^2+\lambda_{\min}^2)^{\frac{\mathfrak{n}_1}{2}}}
|\lambda_{\max}-\lambda_{\min}|
\int_{-\infty}^{\infty} 
\frac 1 {\sqrt{\lambda_{\min}^2 + \omega^2}}
\frac 1 {\omega^2 + \lambda_{\max}\lambda_{\min}}
\frac 1 {\left( 1+ \frac{2\omega^2}{\lambda_{\min}^2 + 
\lambda_{\min}^2}\right)^{\frac{\mathfrak{n}_1}{2}}}
{\rm d}\omega
\\
&& \le
\frac 1 8 
\frac{(\lambda_{\max}-\lambda_{\min})^{\mathfrak{n}_1+1}}{(\lambda_{\max}^2+\lambda_{\min}^2)^{\frac{\mathfrak{n}_1}{2}}}
\frac{1}{\lambda_{\max}\lambda_{\min}} 
\int_{-\infty}^{\infty} 
\frac 1 {\sqrt{\lambda_{\min}^2 + \omega^2}}
\frac 1 {\frac{2\omega^2}{\lambda_{\min}^2+\lambda_{\max}^2} +1} 
\frac 1 {\left( 1+ \frac{2\omega^2}{\lambda_{\min}^2 + \lambda_{\min}^2}\right)^{\frac{\mathfrak{n}_1}{2}}}
{\rm d}\omega
\\
&& =
\frac 1 8 
\frac{(\lambda_{\max}-\lambda_{\min})^{\mathfrak{n}_1+1}}{(\lambda_{\max}^2+\lambda_{\min}^2)^{\frac{\mathfrak{n}_1}{2}}}
\frac{1}{\lambda_{\max}\lambda_{\min}} 
\int_{-\infty}^{\infty} 
\frac 1 {\sqrt{\lambda_{\min}^2 + \omega^2}}
\frac 1 {\left( 1+ \frac{2\omega^2}{\lambda_{\min}^2 + \lambda_{\min}^2}\right)^{\frac{\mathfrak{n}_1}{2}+1}}
{\rm d}\omega\\
&& { \le 
\frac 1 8 
\frac{(\lambda_{\max}-\lambda_{\min})^{\mathfrak{n}_1+1}}{(\lambda_{\max}^2+\lambda_{\min}^2)^{\frac{\mathfrak{n}_1}{2}}}
\frac{1}{\lambda_{\max}\lambda^2_{\min}} 
\int_{-\infty}^{\infty} 
\frac 1 {\left( 1+ \frac{2\omega^2}{\lambda_{\min}^2 + \lambda_{\min}^2}\right)^{\frac{\mathfrak{n}_1}{2}+1}}
{\rm d}\omega}
\end{eqnarray*}
{ where in the last inequality we used $\sqrt{\lambda_{\min}^2 + \omega^2}\geq \lambda_{\min}$. 

Finally, estimating the integral in the above inequality in the same way as in the proof of i), we get
$$
|{\cal X}_{\ell m}|
\le
\frac{1}{2\sqrt{2}}
\frac{(\lambda_{\max} - \lambda_{\min})^{{\mathfrak n}_1+1}}{(\lambda_{\max}^2 +\lambda_{\min}^2)^{{\mathfrak n}_1/2}} 
\frac {\sqrt{\lambda_{\max}^2+\lambda_{\min}^2}} {\lambda_{\max}\lambda_{\min}^2}
\frac 1 {\sqrt{{\mathfrak n}_1}} \sqrt{\frac{{\mathfrak n}_1}{{\mathfrak n}_1/2+1}}. 
$$
} 
\end{proof}


\begin{thebibliography}{10}

\bibitem{Bebendorf.Hackbusch.03}
M. Bebendorf and W. Hackbusch.
\newblock {Existence of ${\mathcal H}$-matrix approximants to the inverse
  {FE}-matrix of elliptic operators with $L^{\infty}$ -coefficients}.
\newblock {\em Numer. Math.}, 95:1--28, 2003.

\bibitem{Benzi.prec.02}
M.~Benzi.
\newblock Preconditioning techniques for large linear systems: a survey.
\newblock {\em J. Comput. Phys}, 182:418--477, 2002.

\bibitem{Benzi.Boito.10}
M.~Benzi and P.~Boito.
\newblock {Quadrature rule-based bounds for functions of adjacency matrices}.
\newblock {\em Lin. Alg. Appl.}, 433:637--652, 2010.

\bibitem{Benzi.Boito.14}
M.~Benzi and P.~Boito.
\newblock Decay properties for functions of matrices over $C^*$ -algebras.
\newblock {\em Lin. Alg. Appl.}, xx:xx, 2013.

\bibitem{BenziSIREV.13}
M.~Benzi, P.~Boito, and N.~Razouk.
\newblock Decay properties of spectral projectors with applications to
  electronic structure.
\newblock {\em SIAM Review}, 55:3--64, 2013.

\bibitem{Benzi.Golub.99}
M.~Benzi and G.~H. Golub.
\newblock Bounds for the entries of matrix functions with applications to
  preconditioning.
\newblock {\em BIT Numerical Mathematics}, 39(3):417--438, 1999.

\bibitem{Benzi2007}
M.~Benzi and N.~Razouk.
\newblock {Decay bounds and $O(n)$ algorithms for approximating functions of
  sparse matrices}.
\newblock {\em ETNA}, 28:16--39, 2007.

\bibitem{Benzi.Tuma.00}
M.~Benzi and M.~Tuma.
\newblock Orderings fo factorized sparse approximate inverse preconditioners.
\newblock {\em SIAM J. Sci. Comput.}, 21(5):1851--1868, 2000.

\bibitem{Bramble.Thomee.69}
J.~H. Bramble and V.~Thom{\'e}e.
\newblock Pointwise bounds for discrete {G}reen's functions.
\newblock {\em SIAM Journal on Numerical Analysis}, 6(4):583--590, Dec. 1969.

\bibitem{Canuto.Simoncini.Verani.13tr}
C.~Canuto, V.~Simoncini, and M.~Verani.
\newblock Adaptive {L}egendre-{G}alerkin methods, 2014.
\newblock In preparation.

\bibitem{Challacombe.99}
M. Challacombe.
\newblock A simplified density matrix minimization for linear scaling
  self-consistent field theory.
\newblock {\em J. Chem. Phys.}, 110:2332--2342, 1999.

\bibitem{SDWFMPWS84}
S.~Demko, W.~F. Moss, and P.~W. Smith.
\newblock Decay rates for inverses of band matrices.
\newblock {\em Math. Comp.}, 43:491--499, 1984.

\bibitem{Freund1989a}
R.~Freund.
\newblock {On polynomial approximations to $f_a(z)=(z-a)^{-1}$ with complex $a$
  and some applications to certain non-hermitian matrices}.
\newblock {\em Approx. Theory and Appl.}, 5:15--31, 1989.

\bibitem{Gradshteyn1980}
I.~S. Gradshteyn and I.~M. Ryzhik.
\newblock {\em Table of integrals, series, and products. Corrected and enlarged
  edition}.
\newblock Academic Press, San Diego, California, 1980.

\bibitem{Horn.Johnson.91}
R.~A. Horn and C.~R. Johnson.
\newblock {\em {Topics in Matrix Analysis}}.
\newblock Cambridge University Press, Cambridge, 1991.

\bibitem{Levequebook.07}
R.~J. LeVeque.
\newblock {\em Finite Difference Methods for Ordinary and Partial Differential
  Equations. Steady State and Time Dependent Problems}.
\newblock SIAM, Philadelphia, 2007.

\bibitem{Maslenetal.98}
P.E. Maslen, C.~Ochsenfeld, C.~A. White, M.~S. Lee, and M.~Head-Gordon.
\newblock {Locality and Sparsity of Ab Initio One-Particle Density Matrices and
  Localized Orbitals}.
\newblock {\em J. Phys. Chem. A}, 102:2215--2222, 1998.

\bibitem{McAllister.Sabotka.73}
G.~T. McAllister and E.~F. Sabotka.
\newblock Discrete {G}reen's functions.
\newblock {\em Math. Comp.}, 27:59--80, 1973.

\bibitem{MeurantJul.1992}
G.~Meurant.
\newblock {A review on the inverse of symmetric tridiagonal and block
  tridiagonal matrices}.
\newblock {\em SIAM J. Matrix Anal. Appl.}, 13(3):707--728, Jul. 1992.

\bibitem{Nabben.99}
R.~Nabben.
\newblock Decay rates of the inverse of nonsymmetric tridiagonal and band
  matrices.
\newblock {\em SIAM J. Matrix Anal. Appl.}, 20:820--837, 1999.

\bibitem{Vandebriletalbook1.08}
R.~Vandebril, M.~Van Barel, and N.~Mastronardi.
\newblock {\em Matrix computations and semiseparable matrices}.
\newblock Johns Hopkins University Press, 2008.

\bibitem{Vejchodsky.Solin.07}
T.~Vejchodsky and P.~Solin.
\newblock Discrete maximum principle for higher-order finite elements in {1D}.
\newblock {\em Math. Comp}, 76(260):1833--1846, October 2007.

\bibitem{Wojtabook.97}
P.~Wojtaszczyk.
\newblock {\em A mathematical introduction to {W}avelets}.
\newblock Cambridge University Press, 1997.

\end{thebibliography}
\end{document}